\theoremstyle{plain}
\newtheorem{theorem}{Theorem}
\newtheorem{lemma}[theorem]{Lemma}
\newtheorem{sub-lemma}[theorem]{Sub-lemma}
\newtheorem{proposition}[theorem]{Proposition}
\newtheorem*{proposition*}{Proposition}
\newtheorem*{theorem*}{Theorem}
\theoremstyle{definition}
\newtheorem*{definition*}{Definition}
\theoremstyle{remark}
\newtheorem*{claim*}{Claim}
\newtheorem*{remark*}{Remark}
\newcommand{\T}{\mathbb{T}}
\newcommand{\R}{\mathbb{R}}
\newcommand{\Z}{\mathbb{Z}}
\newcommand{\Q}{\mathbb{Q}}
\renewcommand{\geq}{\geqslant}
\renewcommand{\leq}{\leqslant}
\begin{document}

{\Large \it \large This paper is dedicated to the memory of Lauro Antonio Zanata}

% \setvruler

\vskip2.0truecm

\title{Rational mode locking for homeomorphisms of the $2$-torus}

\author{Salvador Addas-Zanata}

\address{Instituto de Matem\'atica e Estat\'\i stica, Universidade de S\~ao Paulo, Rua do Mat\~ao 1010, Cidade Universit\'aria, 05508-090 S\~ao Paulo, SP, Brazil -- Salvador is partially supported by CNPq grant 306348/2015-2}

\email{sazanata@ime.usp.br}

\author{Patrice Le Calvez}

\address{Sorbonne Universit\'es, UPMC Univ Paris 06,\\ Institut de Math\'ematiques de Jussieu-Paris Rive Gauche, UMR 7586, CNRS\\ Univ Paris Diderot, Sorbonne Paris Cit\'e\\F-75005, Paris, France -- Patrice is partially supported by CAPES, Ciencia Sem Fronteiras, 160/2012}

\email{patrice.le-calvez@imj-prg.fr}

 \date{}

\begin{abstract}

In this paper we consider homeomorphisms of the torus $\R^2/\Z^2$, homotopic to the identity, and their rotation sets. Let $f$ be such a homeomorphism, $\widetilde{f}:\R^2\to\R^2$ be a fixed lift and $\rho (\widetilde{f})\subset\R^2$ be its rotation set, which we assume to have interior. We also assume that the frontier of $\rho (\widetilde{f})$ contains a rational vector $\rho\in\Q^2$ and we want to understand how stable this situation is. To be more precise, we want to know if it is possible to find two different homeomorphisms $f_1$ and $f_2$, arbitrarily small $C^0$-perturbations of $f$, in a way that $\rho$ does not belong to the rotation set of $\widetilde f_1$ but belongs to the interior of the rotation set of $\widetilde f_2,$ where $\widetilde f_1$ and $\widetilde f_2$ are the lifts of $f_1$ and $f_2$ that are close to $\widetilde f$. We give two examples where this happens, supposing $\rho=(0,0)$. The first one is a smooth diffeomorphism with a unique fixed point lifted to a fixed point of $\widetilde f$. The second one is an area preserving version of the first one, but in this conservative setting we only obtain a $C^0$ example. We also present two theorems in the opposite direction. The first one says that if $f$ is area preserving and analytic, we cannot find $f_1$ and $f_2$ as above. The second result, more technical, implies that the same statement holds if $f$ belongs to a generic one parameter family $(f_t)_{t\in[0,1]}$ of $C^2$-diffeomorphisms of $\T^2$ (in the sense of Brunovsky). In particular, lifting our family to a family $(\widetilde f_t)_{t\in[0,1]}$ of plane diffeomorphisms, one deduces that if there exists a rational vector $\rho$ and a parameter $t_*\in(0,1)$ such that $\rho (\widetilde{f}_{{t_*}})$ has non empty interior, and $\rho\not\in \rho (\widetilde{f}_t)$ for $t<t_*$ close to $t_*$, then $\rho\not\in \mathrm{int}(\rho (\widetilde{f}_{t}))$ for all $t>t_*$ close to $t_*$. This kind of result reveals some sort of local stability of the rotation set near rational vectors of its boundary.

 \end{abstract}

\maketitle

%&LaTeX

%format latex

%&LaTeX%st

%format latex

%&LaTeX

%format latex

%\documentclass[11pt]{article}

%\usepackage{epsfig}

%\usepackage{latexsym,graphicx,color}

%\input macintosh.tex

%\hsize 16cm

%\vsize 24.6cm

%\textheight 23cm \textwidth 16cm

%\headheight -1cm

%\oddsidemargin 0pt \evensidemargin 0pt

%\frenchspacing

%format latex

%-----------------Ins\'erer des figures-------------------------

%\def\picture #1 by #2 (#3){

  %\vbox to #2{

    %\hrule width #1 height 0pt depth 0pt

    %\vfill

    %\special{picture #3} % this is the low-level interface

   % }

  %}

%\def\scaledpicture #1 by #2 (#3 scaled #4){{

  %\dimen0=#1 \dimen1=#2

%  \divide\dimen0 by 1000 \multiply\dimen0 by #4

%  \divide\dimen1 by 1000 \multiply\dimen1 by #4

%  \picture \dimen0 by \dimen1 (#3 scaled #4)}

%  }

%\def\cqfd{\hfil CQFD}

%\def\A{{\bf A}}

%\def\C{{\bf C}}

%\def\D{{\bf D}}

%\def\S{{\bf S}}

%\def\a{\alpha}

%\def\b{\beta}

%\def\s{\sigma}

%\def\e{\varepsilon}

%\def\g{\gamma}\begin{document}

\def\eqalign#1{\null\,\vcenter{\openup\jot

\ialign{\strut\hfil$\displaystyle{##}$&

$\displaystyle{{}##}$\hfil \crcr #1\crcr }}\,}

\def\eqalignno#1{\displ@y \tabskip=\@centering

\halign to\displaywidth{\hfil$\@lign\displaystyle{##}$

\tabskip=0pt &$\@lign\displaystyle{{}##}$

\hfil\tabskip=\@centering

$\llap{$\@lign##$}\tabskip=Opt\crcr #1\crcr}}

%\thispagestyle{empty}

%\maketitle

\section{Introduction and main results}

The main motivation for this paper is to study how the rotation set of a homeomorphism of the two dimensional torus $\T^2$, homotopic to the identity, changes as the homeomorphism changes. For instance, suppose we consider a one parameter continuous family $f_t:{\T^2\rightarrow \T^2}$ of such maps, a continuous family of lifts $\widetilde f_t:{\R^2\rightarrow \R^2}$ and want to study how the parameterized family of rotation sets $t\longmapsto \rho (\widetilde{f}_t)$ varies. The rotation set is a non empty compact convex subset of the plane (see definition below), which varies continuously with the homeomorphism, at least in the special situation when it has interior (see \cite{mz2}). In particular, we are interested in the following problem: Suppose $f:{\T^2\to\T^2}$ is a homeomorphism homotopic to the identity and its rotation set for a given lift $\widetilde f$, which is supposed to have interior, has a point $\rho $ in its boundary with both coordinates rational. Is it possible to find two different arbitrarily small $C^0$-perturbations of $f,$ denoted $f_1$ and $f_2$, in a way that $\rho $ does not belong to the rotation set of the lift of $f_1$ close to $\widetilde f$ but is contained in the interior of the rotation set of the lift of  $f_2$ close to $\widetilde f$? In other words we are asking if the rational mode locking found by de Carvalho, Boyland and Hall \cite{andre} in their particular family of homeomorphisms is, in a certain sense, a general phenomena or not. Our main theorems and examples will show that the answer to this question depends on the hypotheses we have. In general, we can find such maps $f_1$ and $f_2$ as described above, but if we assume certain hypotheses on $f,$ then this sort of ``local mode locking'' happens.

Even in the much simpler context of orientation preserving circle homeomorphisms, the only maps with rational rotation number which can be perturbed in an arbitrarily $C^0$-small way in order to decrease or increase their rotation numbers (the analogous one dimensional version of our condition) are the ones conjugate to rational rotations. In the context of degree one circle endomorphisms, if $f:\T^1\rightarrow \T^1$ is such an endomorphism, $\widetilde{f}:\R\rightarrow \R$ is a fixed lift and $\rho (\widetilde{f})$ is its rotation set (which in this case is a compact interval), if we assume that the rotation set is not reduced to a point and has a rational end $\rho$, then it is not possible to find $C^0$ neighbors of $\widetilde f$ in the space of lifts, one whose rotation set does not contain $\rho$ and the other with $\rho$ in the interior of its rotation set (see Theorem \ref{endomor} below).

In order to make things precise and to present our main results, a few definitions are necessary:

\smallskip

\noindent -\enskip We denote ${\T^2}=\R^2/\Z^2$  the flat torus and $\pi:\R^2\to\T^2$ the universal covering projection.

\smallskip

\noindent -\enskip We denote $\mathrm{Diff}_0^r({\T^2})$ the space of $C^r$ (for $r=0,1,2,...,\infty ,\omega $) diffeomorphisms (homeomorphisms if $r=0$) of the torus homotopic to the identity and  $\widetilde{\mathrm{Diff}}_0^r({\T^2})$ the space of lifts of elements of $\mathrm{Diff}_0^r({\T^2})$ to the plane.

\smallskip

\noindent -\enskip We write $p_{1}: (x,y)\mapsto x$ and $p_{2}:(x,y)\mapsto y$ for the standard projections defined on the plane.

\smallskip

\noindent -\enskip Given $f\in \mathrm{Diff}_0^0({\T^2})$ and a lift $\widetilde f\in  \widetilde{\mathrm{Diff}}_0^0({\T^2})$ the {\it rotation set} $\rho (\widetilde{f})$ of $\widetilde{f}$  can be defined following Misiurewicz and Ziemian \cite{misiu} as: 
$$\rho (\widetilde{f})=\bigcap_{i\geq1} \overline{\bigcup_{n\geq i}\left\{ \frac{\widetilde{f}^n(\widetilde{z})-\widetilde{z}}{n},\widetilde{z}\in \R^2\right\} }$$

This set is a compact convex subset of ${\R^2}$ (see \cite {misiu}), and it was proved in \cite{franksrat} and \cite{misiu} that all points in its interior are realized by compact $f$-invariant subsets of ${\T^2}$, which can be chosen as periodic orbits in the rational case. By saying that some vector $\rho \in \rho (\widetilde{f})$ is realized by a compact $f$-invariant set, we mean that there exists a compact $f$-invariant subset $K\subset {\T^2}$ such that for all $z\in K$ and any $\widetilde{z}\in \pi^{-1}(\{z\})$, one has 
$$\lim_{n\rightarrow \infty }\frac{\widetilde{f}^n(\widetilde{z})-\widetilde{z}}n=\rho .$$
Moreover, the above limit, whenever it exists, is called the {\it rotation vector} of $z$ and denoted $\rho (z)$.

As the rotation set is a compact convex subset of the plane, there are three possibilities for its shape: it is a point,  a linear segment or it has interior. In this paper we only consider the situation when the rotation set has interior. The first main result is:

\begin{theorem}

\label{main1} Let $\widetilde f\in \widetilde{\mathrm{Diff}}_0^{\omega}({\T^2})$ be an area preserving diffeomorphism such that the interior of $\rho (\widetilde{f})$ is non empty and its frontier contains a rational vector $\rho\in\Q^2$. Then, one of the following situations occurs:

\smallskip

\noindent-\enskip there exists a neighborhood $\mathcal U$ of $\widetilde f$ in $\widetilde{\mathrm{Diff}}_0^0({\T^2})$ such that $\rho\in \rho (\widetilde{f}')$ for every ${\widetilde f'}\in\mathcal U$;

\smallskip

\noindent-\enskip there exists a neighborhood $\mathcal U$ of $\widetilde f$ in $\widetilde{\mathrm{Diff}}_0^0({\T^2})$ such that $\rho\not\in \mathrm{int}(\rho (\widetilde{f}'))$ for every ${\widetilde f'}\in\mathcal U$.

\end{theorem}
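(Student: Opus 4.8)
The plan is to first reduce to the case $\rho=(0,0)$, and then to distinguish according to whether the lift $\widetilde f$ itself possesses a fixed point. Write $\rho=p/q$ with $p\in\Z^2$ and $q\ge 1$; replacing $f$ by $f^q$ and $\widetilde f$ by $\widetilde f^{\,q}\circ T_{-p}$, where $T_{-p}$ denotes the translation by $-p$, one gets an analytic area preserving diffeomorphism whose rotation set is $q\,\rho(\widetilde f)-p$, so that $(0,0)$ now lies on its frontier and its interior is still non empty. The map $\widetilde f'\mapsto \widetilde f'^{\,q}\circ T_{-p}$ is continuous on $\widetilde{\mathrm{Diff}}_0^0(\T^2)$, hence a neighbourhood of $\widetilde f^{\,q}\circ T_{-p}$ on which one of the two alternatives holds for $(0,0)$ pulls back to a neighbourhood of $\widetilde f$ on which the corresponding alternative holds for $\rho$; so it suffices to treat $\rho=(0,0)$. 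Recall also that $g\mapsto\rho(g)$ is upper semi-continuous, and lower semi-continuous at every $g$ whose rotation set has non empty interior \cite{mz2}; consequently both conditions ``$(0,0)\notin\rho(\,\cdot\,)$'' and ``$(0,0)\in\mathrm{int}(\rho(\,\cdot\,))$'' define open subsets of $\widetilde{\mathrm{Diff}}_0^0(\T^2)$, so the theorem asserts precisely that $\widetilde f$ cannot lie in the closures of both of them.

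\emph{The fixed point free case.} Since $\widetilde f$ commutes with the integer translations and a fundamental domain is compact, $\inf_{\widetilde z\in\R^2}\lvert\widetilde f(\widetilde z)-\widetilde z\rvert>0$, so every $\widetilde f'$ that is $C^0$-close to $\widetilde f$ is again fixed point free. If such an $\widetilde f'$ had $(0,0)\in\mathrm{int}(\rho(\widetilde f'))$, then by the realization theorem of Franks \cite{franksrat} the vector $0$ would be realized by a fixed point, a contradiction. Hence the second alternative holds in this case, and here neither area preservation nor analyticity is needed.

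\emph{The case of a fixed point.} Now we aim at the first alternative. Let $N_0=\pi(\mathrm{Fix}(\widetilde f))\subset\T^2$ be the (compact, $f$-invariant) set of fixed points of $f$ of zero displacement; it is non empty, it is isolated from the rest of $\mathrm{Fix}(f)$, and it carries a fixed point index $i_0\in\Z$. This index is $C^0$-robust, so if $i_0\ne 0$ then every $\widetilde f'$ close to $\widetilde f$ still has a fixed point of zero displacement, whence $(0,0)\in\rho(\widetilde f')$ and we are done. If $i_0=0$ we use the hypotheses: analyticity forces $\mathrm{Fix}(f)$, hence $N_0$, to be a finite union of points and analytic arcs and circles, while area preservation forces, by the index theorem for area preserving surface homeomorphisms (Pelikan--Slaminka, Le Calvez--Yoccoz), every isolated point of $N_0$ to have index $\le 1$. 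If some isolated point of $N_0$ has a non zero index, it persists under $C^0$-small perturbations and again $(0,0)\in\rho(\widetilde f')$ for all nearby $\widetilde f'$. There remain the degenerate configurations in which every isolated fixed point of zero displacement has index $0$ and/or $N_0$ contains fixed continua. These are handled by combining the tameness of the analytic set $N_0$ with conservativity: Poincaré recurrence together with Atkinson's lemma furnishes, near $N_0$, points whose $\widetilde f$-orbit returns arbitrarily close to its initial lift, and the analyticity of the fixed point set rules out the ``wild'' accumulation pattern exhibited by the conservative $C^0$ example of the present paper, so that such approximate returns can be promoted — via a Brouwer plane translation / Le Calvez transverse foliation argument — to a $C^0$-robust periodic orbit of zero displacement type, again giving $(0,0)\in\rho(\widetilde f')$ for all $\widetilde f'$ close to $\widetilde f$.

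\emph{Main difficulty.} The decisive step is this last one: showing that an analytic area preserving $\widetilde f$ which has a fixed point but no ``robust'' (non zero index) one, and for which $(0,0)$ lies on the frontier of a rotation set with non empty interior, nevertheless keeps $(0,0)$ in the rotation set of every $C^0$-perturbation. This is exactly the point where analyticity is indispensable — the conservative $C^0$ example shows the conclusion fails without it — and where the interplay between the tameness of the analytic fixed point set, the index restrictions coming from area preservation, and forcing theory for transverse trajectories must be used with care.
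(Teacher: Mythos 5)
Your reduction to $\rho=(0,0)$ and your treatment of the fixed point free case are fine (and, as you note, need none of the hypotheses); the paper does the same reduction. The gap is in the case you call decisive, and it is not just a missing technical step: you are aiming at the wrong alternative. When the lift has fixed points but all of them have Lefschetz index $0$, the conclusion that actually holds (and that the paper proves) is the \emph{second} one, namely that $(0,0)\notin\mathrm{int}(\rho(\widetilde f'))$ for every $C^0$-close $\widetilde f'$. The paper's Section 2 shows that for an analytic, area preserving map an isolated fixed point of index $0$ has the local structure of a degenerate saddle with exactly two hyperbolic sectors (via Takens--Moser formal interpolation by a vector field, Lojasiewicz-type control, and the Dumortier--Rodrigues--Roussarie sector theorem); hence it is ``trivializable'': in a suitable continuous chart every nearby point is strictly pushed to one side. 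This is precisely the configuration in which a $C^0$-small perturbation can destroy the zero rotation vector, so your claim that one can always ``promote'' recurrence near such fixed points to a $C^0$-robust orbit of zero displacement is implausible, and the sketch you offer for it (Poincar\'e recurrence, Atkinson's lemma, ``Brouwer plane translation / Le Calvez transverse foliation argument'') is not an argument: no mechanism is given that produces, for \emph{every} nearby $\widetilde f'$, a point of zero rotation vector. What is needed instead, and what the paper supplies, is a global perturbation statement (its ``fundamental proposition''): if all fixed points are trivializable, then for any nearby $\widetilde f'$ with $(0,0)\in\mathrm{int}(\rho(\widetilde f'))$ one takes, by Franks' realization theorem, three periodic orbits whose rotation vectors have $(0,0)$ in the interior of their convex hull, shortcuts these orbits across the rectangles around the fixed points by further positive perturbations, and finally produces a fixed point free lift whose rotation set still has $(0,0)$ in its interior, contradicting Franks. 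None of this is present in, or replaceable by, your outline.

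Two further inaccuracies: the index restriction you quote from Pelikan--Slaminka (isolated fixed points of area preserving homeomorphisms have index $\le 1$) plays no role here, and your reading of the paper's conservative $C^0$ example is off --- that example has a single fixed point, and what fails there is the local sector structure at that point (it is not trivializable), not any ``wild accumulation'' of the fixed point set. Finally, in the case where the fixed point set is infinite, the paper's argument is concrete and you would need it: an analytic curve of fixed points must be null-homotopic because the rotation set has interior, it bounds an invariant disk on which the map is area preserving and (by analyticity) not the identity, and Brouwer/Franks theory then produces a curve of index $1$, giving the first alternative robustly; your appeal to ``tameness'' does not substitute for this.
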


The next result permits to understand the behaviour of generic one parameters families.  Fix $\rho=(p/q,r/q)\in\Q^2$, where $q\geq 1$ and $\mathrm{g.c.d}(p,q,r)=1$. Suppose that $\widetilde f\in\widetilde{\mathrm{Diff}}_0^0({\T^2})$ can be approximated arbitrarily close by $\widetilde f'\in \widetilde{\mathrm{Diff}}_0^0({\T^2})$ such that  $\rho\in \mathrm{int}(\rho (\widetilde{f}'))$. One deduces that the fixed point set of $\widetilde f^{q}-(p,r)$ is not empty. Suppose moreover that this set is discrete (which means that it projects onto  a finite set of $\T^2$) and that $\widetilde f$ may be approximated arbitrarily close by $\widetilde f''\in\widetilde{\mathrm{Diff}}_0^0({\T^2})$ such that  $\rho\not\in \rho (\widetilde{f}'')$. In that case, the Lefschetz index of every fixed point $\widetilde z$ of $\widetilde f^{q}-(p,r)$ is equal to $0$. In particular,  if $\widetilde f\in\widetilde{\mathrm{Diff}}_0^1({\T^2})$, one knows that $1$ is an eigenvalue of $D\widetilde f^q(\widetilde z)$.  

Let us say that a diffeomorphism $\widetilde f\in\mathrm{Diff}_0^1({\T^2})$ is  {\it not highly degenerate} at $\rho$, if the fixed point set of $\widetilde f^{q}-(p,r)$ is discrete and if for every point $\widetilde z$ in this set,  $ Df^q(\widetilde z)$ has at least one eigenvalue different from $1$.

\begin{theorem}

\label{main2} Let $\widetilde f\in \widetilde{\mathrm{Diff}}_0^2({\T^2})$ be such that the interior of $\rho (\widetilde{f})$ is non empty and its frontier contains a rational vector $\rho\in\Q^2$. Suppose moreover that $\widetilde f$ is  {\it not highly degenerate} at $\rho$. Then, one of the following situations occurs:

\smallskip

\noindent-\enskip there exists a neighborhood $\mathcal U$ of $\widetilde f$ in $\widetilde{\mathrm{Diff}}_0^0({\T^2})$ such that $\rho\in \rho (\widetilde{f}')$ for every ${\widetilde f'}\in\mathcal U$;

\smallskip

\noindent-\enskip there exists a neighborhood $\mathcal U$ of $\widetilde f$ in $\widetilde{\mathrm{Diff}}_0^0({\T^2})$ such that $\rho\not\in \mathrm{int}(\rho (\widetilde{f}'))$ for every ${\widetilde f'}\in\mathcal U$.

\end{theorem}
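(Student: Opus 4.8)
The plan is to argue by contraposition: suppose that $\widetilde f$ is not highly degenerate at $\rho$ and that neither of the two alternatives holds. The failure of the first alternative means $\widetilde f$ can be approximated arbitrarily well in $\widetilde{\mathrm{Diff}}_0^0(\T^2)$ by maps $\widetilde f''$ with $\rho\notin\rho(\widetilde f'')$; the failure of the second means it can also be approximated by maps $\widetilde f'$ with $\rho\in\mathrm{int}(\rho(\widetilde f'))$. After replacing $f$ by $f^q$ composed with the translation by $-(p,r)$ (which replaces $\rho$ by $(0,0)$, multiplies rotation sets by $q$, and preserves all regularity and perturbation hypotheses), we may assume $\rho=(0,0)$ and $q=1$. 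As noted in the paragraph preceding the statement, the existence of the approximants $\widetilde f'$ forces $\mathrm{Fix}(\widetilde f)\neq\emptyset$, which by hypothesis is a discrete (hence finite modulo $\Z^2$) set; and the combination of both types of approximants forces, by the index argument already invoked in the text, that every $\widetilde z\in\mathrm{Fix}(\widetilde f)$ has Lefschetz index $0$. The heart of the proof is therefore to derive a contradiction from: $\mathrm{Fix}(\widetilde f)$ finite mod $\Z^2$, every fixed point of index $0$, every fixed point having an eigenvalue $\neq 1$, and the \emph{simultaneous} approximability from both sides.

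The key tool I expect to use is the local normal form near a fixed point $\widetilde z$ with $1$ an eigenvalue of $D\widetilde f(\widetilde z)$ but $D\widetilde f(\widetilde z)\neq \mathrm{Id}$ (the other eigenvalue $\lambda\neq 1$). When $\lambda\notin\{0\}$ and $|\lambda|\neq 1$ one has a one-dimensional center manifold; when $|\lambda|=1$ more care is needed, but the index-zero condition severely constrains the center dynamics. First I would analyze, on the center manifold, the local index and the local structure of the return/displacement map in the direction transverse to $\rho=0$: index $0$ at a fixed point with a single unit eigenvalue means, generically, a ``parabolic'' saddle-node type local picture, and the sign of the displacement $p_v(\widetilde f(\widetilde w)-\widetilde w)$ (where $v$ is a normal to the supporting line of $\rho(\widetilde f)$ at $\rho=0$) is \emph{one-signed} in a punctured neighborhood of $\widetilde z$ along the relevant invariant curve. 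This one-sidedness is exactly what should be incompatible with being approximable both by maps that push $0$ strictly inside the rotation set and by maps that push it strictly out — because producing $0\in\mathrm{int}(\rho(\widetilde f'))$ by a $C^0$-small perturbation requires creating periodic orbits with rotation vectors on both sides of the supporting line near $\rho=0$, and near each fixed point of $\widetilde f$ the displacement has a definite sign that a $C^0$-small perturbation cannot reverse in a controlled way while keeping the orbit near that fixed point; one then needs to account for orbits that are \emph{not} close to $\mathrm{Fix}(\widetilde f)$, which is where the global Franks/Misiurewicz--Ziemian realization results and the structure of $\mathrm{Fix}(\widetilde f)$ enter.

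More concretely, the main line of argument I foresee is: (i) reduce to $\rho=(0,0)$ on the boundary with supporting line $\{y\cdot v = 0\}$, $v=(a,b)$, and $\rho(\widetilde f)\subset\{y\cdot v\le 0\}$; (ii) since $0\in\partial\rho(\widetilde f)$ is approximable from inside, use the $C^0$-perturbative analogue of the Franks realization theorem to produce, for $\widetilde f'$ close to $\widetilde f$, periodic points with rotation vectors $w$ satisfying $w\cdot v>0$, necessarily accumulating (as $\widetilde f'\to\widetilde f$) on $\mathrm{Fix}(\widetilde f)$ because all of $\rho(\widetilde f)$ lies in $\{y\cdot v\le 0\}$ and $(0,0)$ is realized only by sets whose lifts stay at bounded distance — i.e. by $\mathrm{Fix}(\widetilde f)$ when the fixed set is finite mod $\Z^2$; (iii) localize near one such $\widetilde z\in\mathrm{Fix}(\widetilde f)$ and invoke the not-highly-degenerate hypothesis to put $\widetilde f$ in a normal form; (iv) show that index $0$ plus the eigenvalue condition forces the local displacement in the $v$-direction to be of constant sign on one side, contradicting the existence, arbitrarily close to $\widetilde z$ and to $\widetilde f$, of periodic orbits with $w\cdot v>0$ \emph{and} of nearby $\widetilde f''$ with $(0,0)\notin\rho(\widetilde f'')$ (the latter pins down which sign the displacement must have). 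The hard part will be step (iv) in the resonant case $|\lambda|=1$, $\lambda\ne 1$ (so $\lambda=-1$ or a primitive root of unity after passing to a power), where the center manifold is still $1$-dimensional after a further iterate but the interplay between the index-zero hypothesis and the one-sided displacement must be extracted from a careful local Poincaré–Birkhoff / prime-ends or index-theoretic argument rather than from a naive normal form; this is presumably where the bulk of the technical work of the paper lies, and where the hypothesis ``not highly degenerate'' is calibrated precisely so the argument goes through.
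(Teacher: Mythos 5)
Your reduction (pass to $\widetilde f^q-(p,r)$, note that approximability from outside and inside forces the fixed point set to be finite mod $\Z^2$ with all Lefschetz indices zero, and that the not-highly-degenerate hypothesis then makes every such point a fixed point with eigenvalues $1$ and $\lambda\neq 1$) matches the paper's starting point; in fact no contradiction framing is needed, since the paper proves a clean dichotomy: if some fixed point of $\widetilde f^q-(p,r)$ does not have $1$ as an eigenvalue, its index is nonzero and the first alternative holds robustly; otherwise every such point is a saddle--node and the second alternative holds. But your steps (ii)--(iv) are where the actual content should be, and they contain a genuine gap. The claim in (ii) that periodic orbits of nearby $\widetilde f'$ realizing rotation vectors $w$ with $w\cdot v>0$ must accumulate on $\mathrm{Fix}(\widetilde f)$ is unjustified: realizing rotation vector $(0,0)$ does not force bounded orbits, and a $C^0$-small perturbation can carry long periodic orbits, with small nonzero rotation vector, that travel far from the fixed set. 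Consequently the purely local analysis you propose in (iii)--(iv) (one-sided displacement along a center manifold near each fixed point) cannot by itself exclude $(0,0)\in\mathrm{int}(\rho(\widetilde f'))$; and you explicitly defer the hard step, so the essential argument is missing. (A side remark: since $D\widetilde f^q$ is a real $2\times 2$ matrix with eigenvalue $1$, the other eigenvalue is real, so your ``resonant'' case is only $\lambda=-1$; no Poincar\'e--Birkhoff or prime-ends machinery is relevant here.)

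What the paper actually does to close exactly this gap is global, not local. First, a saddle--node fixed point of a $C^2$ map is shown to be \emph{trivializable}: by Carr's center manifold lemma there is a continuous chart in which the first coordinate strictly increases at every non-fixed point of a neighborhood (the local picture is two hyperbolic sectors plus one parabolic one). Second, and this is the heart of the proof, the Fundamental Proposition states that if \emph{all} fixed points are trivializable then there is a $C^0$-neighborhood on which $(0,0)\notin\mathrm{int}(\rho(\widetilde f'))$. Its proof assumes $(0,0)\in\mathrm{int}(\rho(\widetilde f'))$ for some ``positive perturbation'' $\widetilde f'$, invokes Franks' theorem to get three periodic orbits whose rotation vectors contain $(0,0)$ in the interior of their convex hull, then performs finitely many surgeries supported near the fixed-point rectangles (composing with homeomorphisms that only push to the right in the trivializing charts) which replace each orbit by a shorter periodic orbit with rotation vector a positive multiple of the old one, until the three orbits avoid the rectangles; one last right-pushing composition then yields a fixed-point-free lift still realizing these rotation vectors, contradicting Franks' fixed point theorem. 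This is precisely the ``bulk of the technical work'' you left open, and it is carried out by a mechanism (surgery on the perturbed map and its periodic orbits) quite different from the localization strategy you sketch, which as stated would not go through.
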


This result implies the statement about generic families explained in the abstract. More precisely, for $C^r$-generic one parameter families of diffeomorphisms, $r\geq 1$,  Brunovski has shown, see \cite{bru1}, that the only bifurcations that create or destroy periodic points are saddle-nodes and period doubling. Theorem \ref{main2} hypotheses imply that the creation of the periodic orbits with rotation vector $\rho=(p/q,r/q)$ had to be through a saddle-node type of bifurcation because the map $f$ has neighbors without $q$-periodic points with rotation vector equal to $\rho$.

The next two results indicate the requirment of the hypotheses in the previous theorems.

\begin{theorem}

\label{main3} There exists $\widetilde f\in  \widetilde{\mathrm{Diff}}_0^{\infty}({\T^2})$ such

\smallskip

\noindent-\enskip for every neighborhood $\mathcal U$ of $\widetilde f$ in $\widetilde{\mathrm{Diff}}_0^{\infty}({\T^2})$ there exists ${\widetilde f'}\in\mathcal U$ such that $(0,0)\not\in \rho (\widetilde{f}')$;

\smallskip

\noindent-\enskip for every neighborhood $\mathcal U$ of $\widetilde f$ in $\widetilde{ \mathrm{Diff}}_0^0({\T^2})$ there exists ${\widetilde f'}\in\mathcal U$ such that $(0,0)\in \mathrm{int}(\rho (\widetilde{f}'))$;

\smallskip

\noindent-\enskip $\widetilde f$ has a unique fixed point, up to translation by a vector of $\Z^2$.

\end{theorem}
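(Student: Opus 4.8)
The plan is to construct $\widetilde f$ explicitly as a composition (or small modification) of a carefully chosen ``shear-type'' model on $\T^2$, so that its rotation set is a triangle-like region with $(0,0)$ on the boundary, and so that the dynamics near the unique lifted fixed point is flexible enough to be perturbed both ways. Concretely, I would start from a Hamiltonian-like or gradient-like vector field on $\T^2$ whose time-one map $g$ has a single fixed point $z_0$ lifting to a fixed point $\widetilde z_0$ of $\widetilde g$, arranged so that $\widetilde z_0$ is a degenerate fixed point of Lefschetz index $1$ whose local phase portrait has both an ``attracting sector'' and a ``repelling sector'' (e.g.\ the classical picture of a fixed point with one parabolic petal, index $1$, but not a sink or source). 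Then I would compose $g$ with a map supported away from $z_0$ that pushes orbits around the torus in the two generating directions, producing periodic orbits realizing two rational vectors $v_1,v_2$ with $(0,0)$ on the segment between them but the rotation set having nonempty interior — this is where the hypothesis ``$(0,0)\in\partial\rho(\widetilde f)$'' gets built in.

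Next I would verify the three required properties in turn. For the third (unique fixed point up to integer translation): since $\widetilde f^n(\widetilde z)-\widetilde z=0$ forces $z$ to be a fixed point of $f$ with rotation vector $(0,0)$, and by construction the only fixed point of $f$ is $z_0$ with $\widetilde f(\widetilde z_0)=\widetilde z_0$, this is immediate from the explicit formula once one checks $f$ has no other fixed points — a routine computation with the chosen model. For the first property (perturb so that $(0,0)\notin\rho(\widetilde f')$): here I would use the local structure at $\widetilde z_0$. Because $(0,0)$ is realized \emph{only} by the fixed point $\widetilde z_0$ (all interior rotation vectors are realized elsewhere and $(0,0)$ is extremal among the realized ones), a $C^\infty$-small perturbation supported in a neighborhood of $z_0$ that ``opens up'' the parabolic petal — destroying the fixed point and replacing the neutral behavior by a definite drift in a direction pointing out of $\rho(\widetilde f)$ — should remove $(0,0)$ from the rotation set. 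The key point to justify is that after such a perturbation \emph{no} point of $\T^2$ has rotation vector $(0,0)$ and, more strongly, that $(0,0)$ is separated from $\rho(\widetilde f')$ by a line; this uses that away from $z_0$ the original map already had all rotation vectors on one side of a supporting line through $(0,0)$, with the fixed point the only point saturating it. For the second property (perturb so that $(0,0)\in\mathrm{int}\,\rho(\widetilde f')$): symmetrically, a $C^0$-small perturbation near $z_0$ that turns the degenerate fixed point into a small ``rotational'' piece (a map conjugate to a tiny irrational or rational rotation composed with translations) creates periodic orbits with rotation vectors slightly on the \emph{other} side of the supporting line, i.e.\ with negative pairing against the supporting covector; combined with the persistent orbits realizing $v_1$ and $v_2$, convexity of the rotation set then forces $(0,0)$ into the interior.

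The main obstacle I expect is the \emph{first} property: producing a $C^\infty$ (not merely $C^0$) perturbation that genuinely expels $(0,0)$ from the rotation set, rather than just removing the fixed point while leaving nearby points with rotation vector converging to $(0,0)$. This is exactly the subtlety that makes the conservative (area-preserving) analogue in Theorem~\ref{main1}'s ``example'' counterpart only $C^0$: in the dissipative $C^\infty$ world one has the freedom to make the perturbed local map strictly ``wandering'' in a controlled cone, but one must prove a uniform estimate — that there is $\varepsilon>0$ and a covector $w$ with $\langle w,\widetilde f'^n(\widetilde z)-\widetilde z\rangle \geq \varepsilon n$ for all $\widetilde z$ whose orbit spends enough time near $z_0$, while orbits staying away from $z_0$ inherit the strict inequality from the unperturbed model. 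I would handle this by choosing the local model near $z_0$ to be, after the perturbation, the time-one map of a vector field that is a small constant field plus a compactly supported correction, so that passage through the neighborhood contributes a definite positive amount to $\langle w,\cdot\rangle$; the bookkeeping of ``time spent near $z_0$ versus time spent far'' is the technical heart of the argument. The other, more routine, difficulty is checking the global topological fact that the original rotation set is as claimed (nonempty interior, $(0,0)$ extremal, realized only at $\widetilde z_0$), which I would do by exhibiting explicit invariant circles or periodic orbits for the model realizing enough rotation vectors and invoking convexity together with the Franks/Misiurewicz–Ziemian realization results quoted above.
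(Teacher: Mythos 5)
Your plan has two structural errors that would make the statement false for the map you describe, before any of the technical ``bookkeeping'' even starts. First, the local model: a unique fixed point (up to $\Z^2$-translation) of a torus homeomorphism homotopic to the identity cannot have Lefschetz index $1$, since the Lefschetz number is $\chi(\T^2)=0$, so the index of the single fixed point must be $0$; worse, a fixed point of nonzero index persists under every $C^0$-small perturbation, which would force $(0,0)\in\rho(\widetilde f')$ for all $\widetilde f'$ near $\widetilde f$ and make the first bullet unattainable. (This is exactly why the paper's example has an index-$0$ fixed point.) Second, the global geometry: you place $(0,0)$ on the segment between two rational vectors $v_1,v_2$ realized by periodic orbits supported away from $z_0$. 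Your proposed perturbations are supported near $z_0$, so those orbits survive, $v_1,v_2\in\rho(\widetilde f')$, and convexity of the rotation set keeps $(0,0)\in\rho(\widetilde f')$ --- again killing the first bullet. For the construction to have a chance, $(0,0)$ must be an extreme point of $\rho(\widetilde f)$; in the paper the whole rotation set lies in $[0,+\infty)^2$ and, crucially, the displacement field $\widetilde z\mapsto\widetilde f(\widetilde z)-\widetilde z$ omits the open negative cone $(-\infty,0)^2$.

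Even apart from this, the two perturbation mechanisms you sketch are not the ones that work, and the hard steps you flag are not resolved by them. In the paper, expelling $(0,0)$ requires no local surgery or estimate on time spent near the fixed point: one simply takes $\widetilde f-v$ with $v$ a tiny vector in the negative cone; this map is fixed point free (the displacement never equals $v$), still sends vertical lines to the right and horizontal lines upward so its rotation set stays in $[0,+\infty)^2$, and Franks' theorem (a fixed point free lift cannot have $(0,0)$ as an extreme point of its rotation set) gives $(0,0)\notin\rho(\widetilde f-v)$. For pushing $(0,0)$ into the interior, the essential ingredient is a pair of orbits homoclinic to the fixed point which lift to heteroclinics from $(0,1)$ to $(0,0)$ and from $(1,0)$ to $(0,0)$: closing them by a $C^0$-small perturbation near $\Z^2$ produces periodic orbits with rotation vectors $(0,-1/q)$ and $(-1/q',0)$, which together with the persistently realized $(1,0)$ and $(0,1)$ surround the origin. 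Your ``tiny rotational piece'' near $z_0$ cannot substitute for this: any periodic orbit created inside a small contractible support has rotation vector $(0,0)$, and to obtain vectors with a negative coordinate one needs orbits crossing the torus with net negative deck displacement --- precisely the homoclinic structure your model does not provide.
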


The example in Theorem \ref{main3} is not area preserving. We can construct an area preserving example, but it will not be differentiable

\begin{theorem}

\label{main4} There exists $\widetilde f\in\widetilde{\mathrm{Diff}}_0^0({\T^2})$ such

\smallskip

\noindent-\enskip for every neighborhood $\mathcal U$ of $\widetilde f$ in $\widetilde{\mathrm{Diff}}_0^0({\T^2})$ there exists ${\widetilde f'}\in\mathcal U$ such that $ (0,0)\not\in \rho (\widetilde{f}')$;

\smallskip

\noindent-\enskip for every neighborhood $\mathcal U$ of $\widetilde f$ in $\widetilde{\mathrm{Diff}}_0^0({\T^2})$ there exists ${\widetilde f'}\in\mathcal U$ such that $(0,0)\in \mathrm{int}(\rho (\widetilde{f}'))$;

\smallskip

\noindent-\enskip $\widetilde f$ has a unique fixed point, up to translation by a vector of $\Z^2$;

\smallskip

\noindent-\enskip $\widetilde f$ is area preserving.

\end{theorem}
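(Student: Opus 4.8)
The plan is to deduce the statement from the $C^{\infty}$ example of Theorem \ref{main3} by making it area preserving, at the unavoidable cost of differentiability. Write $\widetilde g$ for the diffeomorphism furnished by Theorem \ref{main3} and $z_0$ for its unique fixed point on $\T^2$. The point is that the two flexibility properties we must establish — the existence of arbitrarily $C^0$-small perturbations $\widetilde f'$ with $(0,0)\notin\rho(\widetilde f')$, and of arbitrarily $C^0$-small perturbations $\widetilde f''$ with $(0,0)\in\mathrm{int}(\rho(\widetilde f''))$ — are invariant under conjugation by a lift of a homeomorphism of $\T^2$ homotopic to the identity. Indeed, if $\widetilde h=\mathrm{id}+v$ with $v$ bounded and $\Z^2$-periodic, a telescoping argument gives $\rho(\widetilde h\circ\widetilde k\circ\widetilde h^{-1})=\rho(\widetilde k)$ for every $\widetilde k\in\widetilde{\mathrm{Diff}}_0^0(\T^2)$; moreover conjugation by a fixed homeomorphism is continuous for the $C^0$-topology and does not change the fixed point set up to the obvious identification. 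Hence it suffices to produce a homeomorphism $h$ of $\T^2$, homotopic to the identity, such that $h\circ g\circ h^{-1}$ preserves Lebesgue measure: the conjugate will still have a unique fixed point up to $\Z^2$-translations, the same rotation set — so with non empty interior and $(0,0)$ on its frontier — and the two flexibility properties. Such an $h$ is provided by the Oxtoby--Ulam theorem as soon as $g$ preserves \emph{some} good Borel probability measure (non atomic, of full support), by taking $h$ to carry that measure to Lebesgue measure; this $h$ is in general only continuous, which is exactly where smoothness is lost, and by Theorem \ref{main1} no area preserving analytic example can exist, so some loss of regularity is forced anyway.

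The heart of the proof is therefore to carry out the construction underlying Theorem \ref{main3} so that the resulting diffeomorphism, besides the three listed properties, is free of wandering open sets and hence supports a good invariant measure. One arranges $z_0$ to be the kind of isolated, infinitely degenerate fixed point of Lefschetz index $0$ already used for Theorem \ref{main3} — one near which orbits may loiter arbitrarily long before escaping, but which has no genuine attracting or repelling basin — so that $z_0$ is accumulated on all sides by recurrent orbits, and one checks that the global part of the dynamics is chain recurrent as well; if needed, this is obtained by an innocuous modification of $g$ that changes neither its rotation set, nor its fixed point, nor the mechanism producing the two perturbations. Alternatively, making no hypothesis on the example of Theorem \ref{main3}, one may build the map directly in the area preserving category: keep the global features that place $(0,0)$ on the frontier of a two dimensional rotation set, and replace the germ at $z_0$ by an explicitly area preserving homeomorphism of a disk fixing only its centre, with the prescribed index $0$ local topology and the prescribed flat (hence non differentiable) loitering behaviour, glued to the exterior dynamics along a circle by a continuous normalisation of the area form (Oxtoby--Ulam again).

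In either case, once the area preserving base map $\widetilde f$ is in hand, its two flexibility properties follow by repeating, essentially verbatim, the perturbative constructions of Theorem \ref{main3}: those perturbations are not required to be area preserving, and they use only the local picture at $z_0$ together with the fact that, away from $z_0$, the dynamics drifts strictly off the supporting line of $\rho(\widetilde f)$ through $(0,0)$. The step I expect to be the main obstacle is exactly the reconciliation of area preservation with the very degenerate local structure at $z_0$ that drives the flexibility: equipping the example with a good invariant measure (equivalently, destroying every wandering open set) without losing the unique fixed point, the position of $(0,0)$ on the frontier of a two dimensional rotation set, or the loitering at $z_0$; and, in the direct approach, constructing the area preserving local model at $z_0$ and its area preserving gluing to the exterior, where the clash between ``isolated, infinitely degenerate fixed point with a prescribed escape route'' and ``preserves area'' forces merely continuous regularity and makes the gluing delicate. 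One must then also recheck that the perturbative arguments of Theorem \ref{main3} survive the replacement of the smooth local model by the continuous area preserving one, which is routine since those arguments are topological.
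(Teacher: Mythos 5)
Your reduction step is fine as far as it goes: conjugation by a lift $\widetilde h=\mathrm{id}+v$ of a torus homeomorphism homotopic to the identity preserves rotation sets, is a homeomorphism of $\widetilde{\mathrm{Diff}}_0^0(\T^2)$ for the $C^0$ topology, and therefore transports the unique fixed point and both flexibility properties (which indeed do not require the perturbations to be area preserving). The genuine gap is the hypothesis that makes Oxtoby--Ulam applicable: you never establish that the example of Theorem \ref{main3}, or any ``innocuous modification'' of it, carries an invariant Borel probability measure that is non atomic and of full support. The criterion you offer --- arranging the dynamics to be chain recurrent --- is not sufficient: a parabolic circle map $x\mapsto x+\varepsilon(1-\cos 2\pi x)$ is chain recurrent, yet its only invariant measure is the Dirac mass at the fixed point and it has wandering arcs; the same phenomenon is exactly what one should fear here, since the Theorem \ref{main3} map has strong drift (on $\mathrm{int}(\widetilde C)$ both coordinates strictly increase, and the unique fixed point is approached along homoclinic, parabolic-type orbits), so wandering open sets are entirely plausible and nothing in your argument excludes them. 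Producing a version of that example which genuinely preserves a good measure, while keeping the unique degenerate fixed point, the homoclinic connections and the displacement cone conditions that force $(0,0)\notin\rho(\widetilde f-v)$, is not a side remark: it is the whole difficulty of Theorem \ref{main4}, and it is left unproved in your proposal.

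Your fallback ``direct construction'' gestures at what the paper actually does, but as written it does not work either: gluing an area preserving local model on a disk around $z_0$ to the (non conservative) exterior dynamics of the Theorem \ref{main3} example cannot yield a globally area preserving homeomorphism, and invoking Oxtoby--Ulam for the gluing again presupposes an invariant good measure for the global map. The paper avoids all of this by building $\widetilde f=\widetilde f_V\circ\widetilde f_H$ entirely within the area preserving category: $\widetilde f_H$ is obtained from an explicit measure preserving change of coordinates $\widetilde h$ (defined by an equal-area condition on quadrilaterals) conjugating a shear-like map $\widetilde f_0$, the homoclinic orbit is created by composing with area preserving homeomorphisms supported on a family of pairwise disjoint disks $D_k$ chosen so that the displacement stays in a fixed half-plane, and the last translation-by-$(1,0)$ orbit comes from an explicit area preserving shear $\widetilde g_1$; the exclusion of $(0,0)$ from $\rho(\widetilde f-v)$ is then obtained from cone conditions on the displacement field together with Franks' theorem. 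To repair your proof you would have to either exhibit the invariant good measure for a concrete modification of the Theorem \ref{main3} example, or carry out such an explicit conservative construction, which is essentially the paper's argument.
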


To conclude, we present a simple theorem about endomorphisms of the circle, which shows that in this situation, the conclusion of Theorem \ref{main1} holds in full generality. The proof we present is due to Andr\'es Koropecki. Write $\pi:\R\to\T$ for the universal covering projection of ${\T}=\R/\Z$. Denote $\mathrm{End}_0({\T^1})$ the space of continuous maps $f:\T\to\T$ homotopic to the identity and $\widetilde{\mathrm{End}}_0({\T})$ the space of lifts of elements of $\mathrm{End}_0({\T})$ to the line. The {\it rotation set} $\rho (\widetilde{f})$ of $\widetilde{f}\in \widetilde{\mathrm{End}}_0{\T})$ is a real segment defined as: 
$$\rho (\widetilde{f})=\bigcap_{i\geq1} \overline{\bigcup_{n\geq i}\left\{ \frac{\widetilde{f}^n(\widetilde{z})-\widetilde{z}}{n},\widetilde{z}\in \R\right\} }.$$ 

Let $\widetilde f\in\widetilde{\mathrm{End}}_0({\T})$ be such that $\rho (\widetilde{f})$ is not reduced to a point and its frontier contains a rational number $\rho$. We will prove that one of the following situations occurs:

\smallskip

\noindent-\enskip there exists a neighborhood $\mathcal U$ of $\widetilde f$ in $\widetilde{\mathrm{End}}_0({\T})$ such that $\rho\in \rho (\widetilde{f}')$ for every ${\widetilde f'}\in\mathcal U$;

\smallskip

\noindent-\enskip there exists a neighborhood $\mathcal U$ of $\widetilde f$ in $\widetilde{\mathrm{End}}_0({\T})$ such that $ \rho\not\in \mathrm{int}(\rho (\widetilde{f}'))$ for every ${\widetilde f'}\in\mathcal U$.

In fact we have the stronger following result:

\begin{theorem}

\label{endomor} Let $\rho=p/q$ be a rational number and $\widetilde f\in\widetilde{\mathrm{End}}_0({\T})$ such that $\widetilde{f}^{q}-p\not=\mathrm{Id}$. Then, one of the following situations occurs:

\smallskip

\noindent-\enskip there exists a neighborhood $\mathcal U$ of $\widetilde f$ in $\widetilde{\mathrm{End}}_0({\T})$ such that $\rho\in \rho (\widetilde{f}')$ for every ${\widetilde f'}\in\mathcal U$;

\smallskip

\noindent-\enskip there exists a neighborhood $\mathcal U$ of $\widetilde f$ in $\widetilde{\mathrm{End}}_0({\T})$ such that $ \rho\not\in \mathrm{int}(\rho (\widetilde{f}'))$ for every ${\widetilde f'}\in\mathcal U$.

\end{theorem}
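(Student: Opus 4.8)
## Proof Strategy for Theorem \ref{endomor}

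The plan is to reduce the dichotomy to a simple trichotomy for the continuous function $\widetilde g=\widetilde f^{q}-p$, which commutes with integer translations and hence descends to a well-defined function on $\T$. Since $\widetilde g\neq\mathrm{Id}$ by hypothesis, the quantity $\varphi(\widetilde z)=\widetilde g(\widetilde z)-\widetilde z$ is a continuous $\Z$-periodic function on $\R$, not identically zero. Recall the standard fact that $\rho\in\mathrm{int}(\rho(\widetilde f))$ if and only if $\varphi$ takes both strictly positive and strictly negative values (equivalently, $\widetilde g$ has a point moved up and a point moved down), while $\rho\in\rho(\widetilde f)$ if and only if $\varphi$ has a zero or changes sign, i.e.\ $\varphi$ takes a value $\le 0$ somewhere and a value $\ge 0$ somewhere; and $\rho\notin\rho(\widetilde f)$ exactly when $\varphi$ has constant strict sign (always $>0$ or always $<0$). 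These are consequences of the interval structure of the rotation set for degree-one endomorphisms together with the intermediate value theorem.

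With this dictionary, there are exactly three mutually exclusive cases for $\widetilde f$ itself. \textbf{Case 1:} $\varphi>0$ everywhere, or $\varphi<0$ everywhere. Then by compactness of $\T$ there is $\delta>0$ with $|\varphi|\ge\delta$, and this strict-sign condition is clearly $C^0$-open in $\widetilde{\mathrm{End}}_0(\T)$ (a small perturbation of $\widetilde f$ perturbs $\widetilde f^{q}$ by less than $\delta$ in the sup norm, since composition is continuous in the $C^0$ topology on this space). Hence a neighborhood $\mathcal U$ of $\widetilde f$ consists of maps with $\rho\notin\rho(\widetilde f')\supset$, in particular $\rho\notin\mathrm{int}(\rho(\widetilde f'))$; the second alternative of the theorem holds. \textbf{Case 2:} $\varphi$ takes a strictly positive value and a strictly negative value. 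Again by continuity of composition, every $\widetilde f'$ sufficiently $C^0$-close to $\widetilde f$ still has $\widetilde g'=(\widetilde f')^{q}-p$ moving one point strictly up and one point strictly down, so $\rho\in\mathrm{int}(\rho(\widetilde f'))\subset\rho(\widetilde f')$; the first alternative holds.

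\textbf{Case 3:} $\varphi\ge 0$ everywhere but vanishes somewhere, or $\varphi\le 0$ everywhere but vanishes somewhere (the two subcases are symmetric; say $\varphi\ge 0$, $\varphi\not\equiv 0$). Here $\rho$ is an endpoint of $\rho(\widetilde f)$ and lies in $\rho(\widetilde f)$ but not its interior, and the point is that this property is \emph{not} $C^0$-stable in either direction — yet we must still land in one of the two alternatives. This is the only delicate case. The key observation is that we can perturb $\widetilde f$ to push $\varphi$ strictly positive: since $\varphi\ge 0$ and is continuous and periodic, adding a small constant $\epsilon>0$ to $\widetilde f$ (replacing $\widetilde f$ by $\widetilde z\mapsto\widetilde f(\widetilde z)+\epsilon$) strictly increases every value of $\widetilde f^{q}$ by an amount bounded below by a positive multiple of $\epsilon$, hence makes $\varphi$ strictly positive; this shows arbitrarily small perturbations $\widetilde f'$ with $\rho\notin\rho(\widetilde f')$ exist. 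On the other hand, I claim arbitrarily small perturbations with $\rho\in\mathrm{int}(\rho(\widetilde f'))$ need \emph{not} exist, and in fact do not: one must examine whether $\widetilde g$ can be pushed to have a point moving strictly down by a small perturbation. If $\widetilde g$ has an interior zero $\widetilde z_0$ of $\varphi$ at which $\varphi$ does not attain a local minimum value of $0$ from both sides in a degenerate way — concretely, if near $\widetilde z_0$ the graph of $\widetilde g$ crosses the diagonal — then a small perturbation does create a downward-moving point; but the hypothesis $\widetilde g\neq\mathrm{Id}$ with $\varphi\ge 0$ forces every zero of $\varphi$ to be a point where $\widetilde g$ is tangent to the diagonal from above, and whether such tangencies survive as genuine sign changes under perturbation depends on higher-order behavior that the $C^0$ topology cannot control. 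The correct resolution is that Case 3 actually splits according to whether $\widetilde g$, viewed as a circle map, is $\le\mathrm{Id}$ or $\ge\mathrm{Id}$ in a stable way after the sign-normalization; I expect the honest argument to show that in Case 3 the second alternative always holds, because $\varphi\ge0$ is a $C^0$-closed condition on the perturbation side that prevents $\varphi$ from becoming negative — wait, that is false in general, so the real content must be: \emph{being able to make $\rho$ interior requires $\varphi$ to change sign, and $\varphi\equiv 0$ is excluded, so one of the two one-sided conditions $\varphi\ge0$ or $\varphi\le0$ holds, and that one-sided condition, while not itself open, implies that the opposite one-sided strict condition is achievable by perturbation while a two-sided strict condition is not robustly achievable}. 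The main obstacle, and the heart of the proof, is therefore establishing that in Case 3 one cannot create an interior point by an arbitrarily small $C^0$ perturbation — i.e.\ that the one-sided inequality $\varphi\ge 0$, though not open, still obstructs pushing a point strictly to the negative side while simultaneously keeping a point on the positive side in a way that is forced; I would handle this by showing directly that if $\varphi\ge 0$ then every $\widetilde f'$ close to $\widetilde f$ still has $(\widetilde f')^q-p$ with no point moving below $-\eta$ for the relevant small $\eta$ only fails if $\varphi$ genuinely changes sign, which the $C^0$-closeness to a nonnegative $\varphi$ does not prevent — so in fact the precise statement is that \emph{Case 3 with $\varphi\ge0$ falls into the first alternative is impossible and into the second is also not automatic}, meaning the clean proof must instead show Case 3 never occurs under a sharper analysis, OR that the theorem's dichotomy in Case 3 is realized via the observation that $\rho$ being a \emph{non-interior} point of a nondegenerate interval is $C^0$-robust upward but the interval always contains $\rho$, placing us in the first alternative. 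I would resolve this by a careful case analysis of the zero set of $\varphi$, and I expect Koropecki's proof to exploit exactly the rigidity that $\varphi\ge 0$ with $\varphi\not\equiv 0$ forces the circle map $\widetilde g$ to have a specific fixed-point/interval structure making the dichotomy transparent.
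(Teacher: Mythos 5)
Your trichotomy on $\varphi=\widetilde g-\mathrm{Id}$, $\widetilde g=\widetilde f^{q}-p$, is the same starting point as the paper, and your Cases 1 and 2 are essentially the paper's first two cases (with one caveat noted below). But Case 3 --- constant weak sign with a zero, which is the only nontrivial case and the entire content of the theorem --- is not proved: your discussion there is explicitly unresolved, oscillates between incompatible resolutions (``Case 3 never occurs'', ``the first alternative holds'', ``the second alternative holds''), and ends by deferring to an expected rigidity argument rather than giving one. The missing idea is a confinement argument. Say $\varphi\le 0$ everywhere and $\varphi(x_0)<0$ (the case $\varphi\ge 0$ is symmetric). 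Then $\widetilde g([x_0-1,x_0])$ is a compact subset of the open set $(-\infty,x_0)$: indeed $\widetilde g(x)\le x<x_0$ on $(x_0-1,x_0)$, $\widetilde g(x_0)<x_0$, and $\widetilde g(x_0-1)=\widetilde g(x_0)-1<x_0$. This inclusion is $C^0$-open, so for every $\widetilde f'$ close to $\widetilde f$ one still has $\widetilde g'([x_0-1,x_0])\subset(-\infty,x_0)$ with $\widetilde g'=\widetilde f'^{\,q}-p$, and by $\Z$-periodicity ($\widetilde g'(x+k)=\widetilde g'(x)+k$) the half-line $(-\infty,x_0]$ is forward invariant under $\widetilde g'$. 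Hence every forward $\widetilde g'$-orbit is bounded above modulo an integer translate, so the rotation set of $\widetilde g'$ lies in $(-\infty,0]$, i.e.\ $\rho(\widetilde f')\subset(-\infty,p/q]$, and $\rho\not\in\mathrm{int}(\rho(\widetilde f'))$ for \emph{all} nearby $\widetilde f'$: the second alternative holds. This is exactly the step your write-up circles around without landing on; note that it does not require preventing $\varphi'$ from changing sign --- a perturbation may well create a fixed point or a sign change, but it cannot push the rotation set to the other side of $p/q$.

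A secondary point: your ``dictionary'' claim that $\rho\in\mathrm{int}(\rho(\widetilde f))$ if and only if $\varphi$ takes both strict signs is false in the direction you use in Case 2 (e.g.\ a circle homeomorphism whose displacement changes sign has rotation set reduced to $\{p/q\}$, with empty interior). This does not break Case 2, because the conclusion you actually need there is only $\rho\in\rho(\widetilde f')$, which follows from the intermediate value theorem applied to $\varphi'$; but the false equivalence is also what misleads you in Case 3, since it suggests that creating a sign change by perturbation would put $\rho$ in the interior --- it does not, as the confinement argument above shows.
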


\begin{proof} If the map $\widetilde{f}^{q}-p-\mathrm{Id}$ takes a positive and a negative value, it will be the same for $\widetilde f'^{q}-p-\mathrm{Id}$ if $\widetilde f'\in \widetilde{\mathrm{End}}_0({\T})$ is close to $f$. This implies that $\widetilde f'^{q}-p$ has at least one fixed point, so the first assertion is true. If $\widetilde{f}^{q}-p-\mathrm{Id}$ does not vanish, it will be the same for $\widetilde f'^{q}-p-\mathrm{Id}$ if $\widetilde f'\in \widetilde{\mathrm{End}}_0({\T})$ is close to $f$. This implies that $\rho$ does not belong to $\rho (\widetilde f ')$, so the second assertion is true. As we suppose that $\widetilde{f}^{q}-p-\mathrm{Id}$ is not identically zero, it remains to study the case where it vanishes but has constant sign. Suppose that there exists $x_0$ such that $\widetilde g(x_0)<x_0$, where $\widetilde g=\widetilde f^q-p$. By hypothesis $\widetilde g(x)\leq x$ for every $x\in\R$ and so $\widetilde g([x_0-1,x_0])\subset (-\infty, x_0)$. We deduce that there exists a neighborhood $\mathcal U$ of $\widetilde f$ in $\widetilde{\mathrm{End}}_0({\T})$ such that $\widetilde g'([x_0-1,x_0])\subset (-\infty, x_0)$, where $\widetilde g'=\widetilde f'^q-p$. Consequently, one gets $g'((-\infty,x_0])\subset (-\infty, x_0)$ and more generally $\widetilde g'^{n}((-\infty,x_0])\subset (-\infty, x_0)$  for every $n\geq 1$. The fact that 
$$x\leq x_0\text{ and } n\geq 1\Rightarrow g^n(x)\leq x_0$$
implies that the rotation set of $f'$ in included in $(-\infty,p/q]$. Similarly, if $\widetilde{f}^{q}-p-\mathrm{Id}$ vanishes and is non negative, there exists a neighborhood $\mathcal U$ of $\widetilde f$ in $\widetilde{\mathrm{End}}_0({\T})$ such that $\rho(\widetilde f')\subset [p/q,+\infty)$.\end{proof}

This paper is organized as follows. In the next section we present a brief summary on the local dynamics near fixed points of area preserving analytic plane diffeomorphisms. In the third section we prove a fundamental result necessary to get Theorems 1 and 2. The precise proofs of Theorems 1,2, 3 and 4 will be given in the fourth section.

\section{local study of analytic and area-preserving planar diffeomorphisms}

The dynamics near isolated singularities of analytic vector fields in the plane is very well understood, at least when the topological index of the singularity is not $1$ (see for instance Dumortier \cite{singvet}). It can be proved that, if the singularity is neither a focus or a center (which have topological index equal $1$), then the dynamics near it can be obtained from a finite number of sectors, glued in an adequate way. Topologically, these sectors can be classified in four types:\ elliptic, hyperbolic, expanding and attracting. Dumortier, Rodrigues and Roussarie studied this problem for planar diffeomorphisms near fixed points in \cite{Dur}. Let us recall a fundamental notion in their study: a smooth planar vector field $X$, vanishing at the origin has {\it Lojasiewicz type of order $k\geq 1$} if  there exist $C>0$ and $\delta >0$ such that $\Vert x\Vert <\delta\Rightarrow \Vert X(x)\Vert \geq C\Vert x\Vert ^k$. It can be easily seen that this notion depends only on the $k$-jet $J^k_X$ at $(0,0)$. In particular $X$ has Lojasiewicz type of order $k\geq 1$ if and only if it is the case for $J^k_X$ and one can extend this notion to any formal vector field by looking at the truncated series of order $k$. If $X$ is a real analytic planar vector field vanishing at the origin, it has Lojasiewicz type, which means that there exists $k\geq 1$ such that $X$ has Lojasiewicz type of order $k\geq 1$, (see Bierstone-Milman \cite{lojaana}).

The situation we want to understand in this sub-section is the following: Is there a topological picture of the dynamics near an index zero isolated fixed point of an analytic area-preserving planar diffeomorphism? It turns out that the area-preservation together with the zero index hypothesis imply that the eigenvalues of the derivative of the diffeomorphism at the fixed point are both equal to $1$. We will suppose that the fixed point is the origin. The area-preservation implies that the infinite jet $J^{\infty}_f$ of $f$ at $(0,0)$ is the time one mapping of a unique formal vector field $\widetilde X$ (defined by a formal series), see  \cite{takens} and Moser \cite{moser}.  Let us fix a smooth vector field $X$ such that $J^{\infty}_X=\widetilde X$ and consider the time one map $g$ of the flow defined by $X$. Let us prove by contradiction that $\widetilde X$ has Lojasiewicz type. If not, by Proposition 2 of Llibre-Saghin \cite{LS}, one can choose $X$ such that  $0$ is a non isolated zero of $X$ and consequently a non isolated fixed point of $g$. One deduces that $J^{\infty}_{g-\mathrm{Id}}$ has not Lojasiewitz type. On the other hand, $J^{\infty}_{f-\mathrm{Id}}$ has Lojasiewitz type because $f$ is analytic. So, we have a contradiction because $f$ and $g$ have the same infinite jet at $(0,0)$.

Let us explain now why $X$ and $f-\mathrm{Id}$ have the same index at $0$. The fact that $f-\mathrm{Id}$ and $g-\mathrm{Id}$ have the same index (which is equal to zero) at the origin is given by Proposition 1 of \{\cite{LS}. It is a consequence of the fact that these vector fields have the same infinite jet at $0$ and that this jet has Lojasiewicz type. Write $g_t$, $t\in(0,1]$,  for the time $t$ map of the flow induced by $X$. There exists $\delta>0$ such that the vector field $X$ and the vector fields $g_t-\mathrm{Id}$, $t\in(0,1]$ have no zero satisfying  $0<\Vert x\Vert\leq \delta$. Otherwise, $g=g_1$ has an invariant curve in any neighborhood of $0$, which implies that the index of $g-\mathrm{Id}$ is $1$, in contradiction with the hypothesis. By computing the indices on the circle of equation $\Vert x\Vert =\delta$, one deduces that the indices of the $g_t-\mathrm{Id}$ are all the same. By letting $t$ tend to $0$, one deduces that this common index is the index of $X$.

The fact that the index of $X$ is not $1$ implies, by \cite {singvet}, that $X$ has at least one characteristic orbit at $0$, which means an integral curve $\gamma$ of $X$ or $-X$ defined on $[0,\infty)$ such that:
$$\gamma(t)\not=0, \enskip \lim_{t\to+\infty}\gamma(t)=0,\enskip \lim_{t\to+\infty} \gamma(t)/\Vert \gamma(t)\Vert \text{ exists}.$$ The existence of a characteristic orbit only depends on a finite jet of $\widetilde{X}$ and is independent of the choice of $X$. The fact that $\widetilde X$ has Lojasiewicz type and has a characteristic orbit permits us to apply \cite{Dur}, Theorem D:   there exists a vector field $X'$ such that $J_{X'}^{\infty}=\widetilde X$ and such that $f$ is weakly-$C^0$-conjugated to the time-$1$ map of the flow induced by $X'$. 

 The above theorem implies, see \cite{Dur} pages 39-40, that the dynamics of $f$ in a neighborhood of the origin is obtained by gluing a finite number of sectors, which can be attracting, expanding, elliptic or hyperbolic and moreover, in our situation, as we are supposing that $f$ preserves the area, there can not be elliptic, expanding and attracting sectors. As the topological index of the fixed point is $0,$ there must be exactly two invariant hyperbolic sectors and the dynamics is topologically as in Figure 1.

\begin{figure}[ht!]
\hfill
\includegraphics [height=48mm]{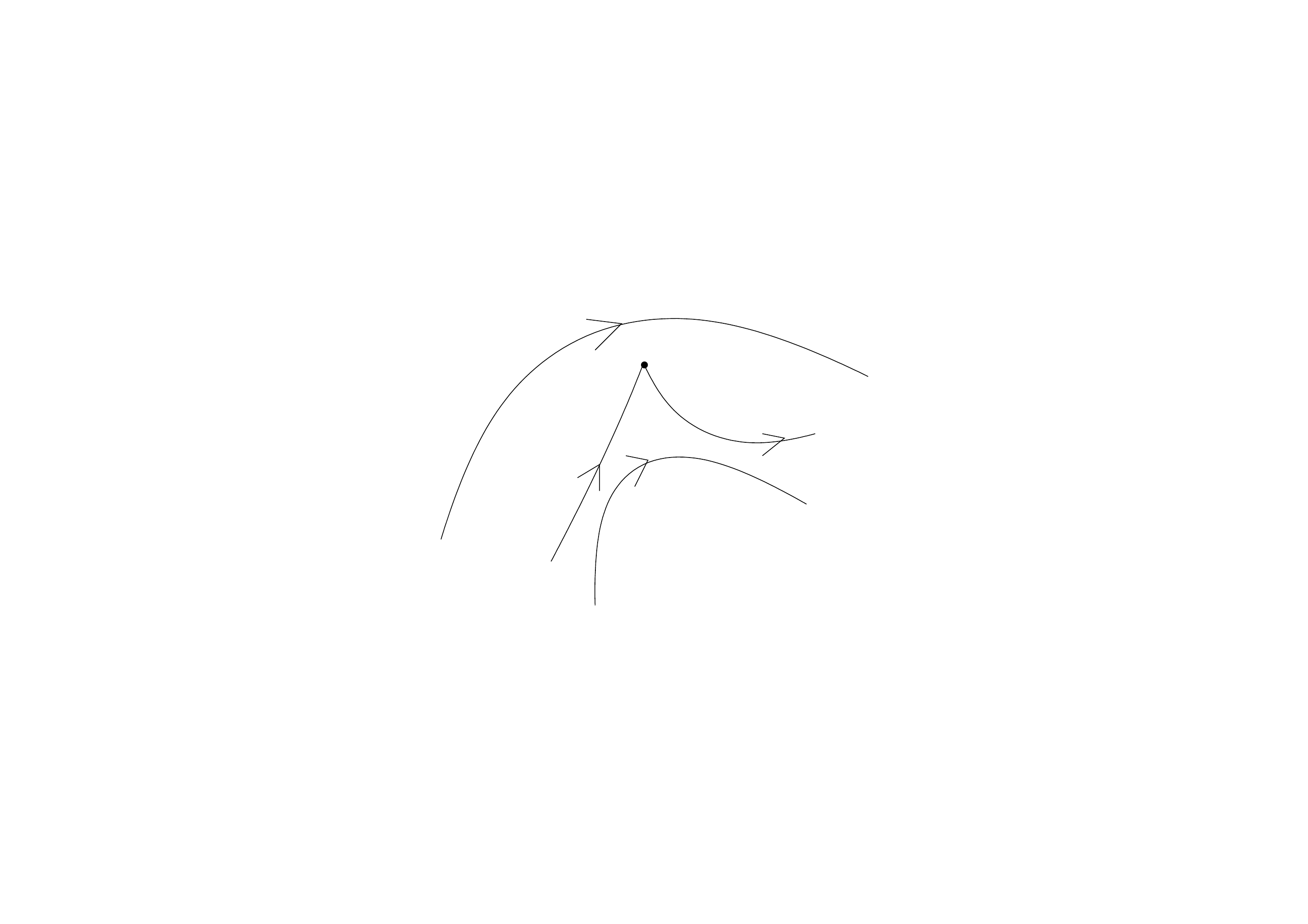}
\hfill{}
\caption{\small }
%\label{figure_order_lines}
\end{figure}

\section{Fundamental proposition}

We introduce in this section an important notion for our problem. Let $f$ be a homeomorphism of a surface $M$. A fixed point $z_0$ of $f$ will be called {\it trivializable} if there exists a continuous chart $h:U\subset M\to V\subset \R^2$  at $z_0$ such for every 
$z\in U\cap f^{-1}(U)$, one has $p_1\circ h\circ f(z)>p_1\circ h(z)$ if $z\not=z_0$.

Let us give two categories of trivializable fixed points.

\begin{proposition}

\label{saddle-node} 

Let $f$ be a $C^2$-diffeomorphism of a surface $M$ and $z_0$ a fixed point that is a saddle-node, meaning that one of the eigenvalue of $Df(z_0)$ is $1$ and the other one is not. Then $z_0$ is trivializable.

\end{proposition}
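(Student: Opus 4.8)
The plan is to bring $f$ into a simple topological normal form near $z_0$ by means of invariant manifolds, and then to write down the chart $h$ explicitly, tilting the first coordinate so as to absorb the contraction coming from the hyperbolic eigenvalue. First I would normalize. Let $\lambda\neq 1$ be the second eigenvalue of $Df(z_0)$; being a saddle-node, $\lambda$ is hyperbolic, $|\lambda|\neq 1$. It suffices to treat the case $0<|\lambda|<1$: indeed $h$ trivializes $f$ at $z_0$ if and only if $\sigma\circ h$ trivializes $f^{-1}$ at $z_0$, where $\sigma(x,y)=(-x,y)$ --- substitute $z=f^{-1}(w)$ in $p_1\circ h\circ f(z)>p_1\circ h(z)$ and change sign --- so the case $|\lambda|>1$ follows from the case $|1/\lambda|<1$ applied to $f^{-1}$. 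Assume then $0<|\lambda|<1$. By the center manifold theorem there is a $C^1$ locally $f$-invariant curve $W^c$ through $z_0$ tangent to the $1$-eigenspace; its restriction $g:=f|_{W^c}$ is a $C^1$ map with $g(z_0)=z_0$ and $g'(z_0)=1$, and the saddle-node hypothesis gives, in a coordinate $u$ on $W^c$ with $u(z_0)=0$, an expansion $g(u)-u=au^{2}+o(u^{2})$ with $a\neq 0$; after reversing $u$ if necessary, $g(u)>u$ for all $0<|u|<\epsilon$. Since $|\lambda|<1$ there is moreover a strong stable manifold $W^{ss}$ (a $C^1$ $f$-invariant curve tangent to the $\lambda$-eigenspace and transverse to $W^c$) and an $f$-invariant strong stable foliation $\mathcal F^{ss}$ of a neighborhood of $z_0$ by $C^1$ curves tangent to the strong direction; identifying its leaf space with $W^c$ via the transversality, the induced dynamics is exactly $g$, and $W^{ss}$ is the leaf through $z_0$.

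Next I would use $W^c$ and $\mathcal F^{ss}$ to build a topological product chart $\Phi=(u,v)\colon U\to\R^2$, $\Phi(z_0)=(0,0)$, in which the leaves of $\mathcal F^{ss}$ are the verticals $\{u=\mathrm{const}\}$, with $W^c=\{v=0\}$ and $W^{ss}=\{u=0\}$. In these coordinates $f$ becomes a skew product $\Phi\circ f\circ\Phi^{-1}(u,v)=(g(u),\psi(u,v))$, with $\psi(u,0)=0$ because $W^c$ is invariant; and, shrinking $U$, one has $|\psi(u,v)|\le\mu|v|$ for some fixed $\mu\in(0,1)$, since $\partial_v\psi(0,0)=\lambda$ forces $|\partial_v\psi|\le\mu$ near the origin, whence $|\psi(u,v)|=|\psi(u,v)-\psi(u,0)|\le\mu|v|$. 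I then claim that $h:=T\circ\Phi$ works, where $T(u,v):=(u-v^{2},v)$ is a homeomorphism of $\R^2$ with inverse $(s,v)\mapsto(s+v^{2},v)$: for every $z\in U\cap f^{-1}(U)$ with $\Phi(z)=(u,v)$,
$$p_1\circ h\circ f(z)-p_1\circ h(z)=\big(g(u)-\psi(u,v)^{2}\big)-\big(u-v^{2}\big)=\big(g(u)-u\big)+\big(v^{2}-\psi(u,v)^{2}\big),$$
and both summands are nonnegative --- the first vanishing only when $u=0$, the second being $\ge(1-\mu^{2})v^{2}$ and vanishing only when $v=0$ --- so the whole expression is positive unless $(u,v)=(0,0)$, i.e. unless $z=z_0$. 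This is precisely trivializability of $z_0$.

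The delicate point is the construction of the chart $\Phi$ with this skew-product form together with the uniform estimate $|\psi(u,v)|\le\mu|v|$: it rests on the existence and $C^1$ regularity of the strong stable foliation over the center manifold of a $C^2$ diffeomorphism, and on the fact that off $W^c$ the displacement in the strong direction is genuinely contracted, by a factor bounded away from $1$ near $z_0$ --- a standard but mildly technical normal hyperbolicity estimate. Everything else --- the reduction from $|\lambda|>1$ to $|\lambda|<1$, the sign of $g(u)-u$, and the algebra with $T$ --- is elementary. As an alternative one could sidestep the foliation bookkeeping by invoking a topological normal form for saddle-nodes, namely local conjugacy to the time-one map of the flow of $\dot x=x^{2}$, $\dot y=\mathrm{sign}(\lambda)\,y$, in which the very same explicit $T$ does the job.
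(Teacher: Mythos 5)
Your argument is correct in substance and reaches the same conclusion by the same overall strategy as the paper (reduce to the contracting case, use the center manifold together with the uniform contraction transverse to it, then exhibit coordinates in which the first coordinate strictly increases), but the implementation of the last step differs. The paper invokes Carr's center manifold lemma, which gives the invariant graph plus the exponential attraction estimate $\vert y_n-h(x_n)\vert\le C\lambda^n\vert y_0-h(x_0)\vert$, deduces the topological picture of two hyperbolic sectors glued to one attracting sector, and then asserts trivializability by observing that a vertical foliation can be chosen topologically transverse to the locally invariant leaves; no explicit chart is written down. You instead straighten an invariant strong stable foliation to get the skew--product form $(u,v)\mapsto(g(u),\psi(u,v))$ with $\vert\psi(u,v)\vert\le\mu\vert v\vert$, and then the shear $T(u,v)=(u-v^2,v)$ makes the verification completely explicit: $\bigl(g(u)-u\bigr)+\bigl(v^2-\psi(u,v)^2\bigr)>0$ off the fixed point. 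What your route buys is an explicit trivializing chart and a clean treatment of negative $\lambda$ (the paper's reduction ``$\det Df(z_0)<1$'' and its statement of Carr's lemma with $\lambda\in(0,1)$ are slightly loose on this point); what it costs is that the burden shifts to the existence and continuity of the strong stable foliation with the uniform leafwise contraction, which you correctly flag as the technical input, whereas the paper outsources the analytic content to Carr's cited lemma. Your alternative remark (topological normal form: time-one map of $\dot x=x^2$, $\dot y=\mathrm{sign}(\lambda)y$) is essentially the sector picture the paper uses.

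One caveat, affecting the hypotheses rather than the mechanics: you claim the saddle-node hypothesis gives $g(u)-u=au^2+o(u^2)$ with $a\neq0$. The stated hypothesis (eigenvalues $1$ and $\lambda\neq1$) does not imply this, and in fact all your argument needs is that $g-\mathrm{id}$ has constant sign near $0$ (one-sided center dynamics); if $g(u)-u$ changes sign, as for $x\mapsto x+x^3$ crossed with a contraction, the fixed point has index $\pm1$ and is not trivializable, so some such hypothesis is genuinely required. The paper makes the very same implicit assumption when it says ``without loss of generality points with negative $x$ on the center manifold converge forward and those with positive $x$ converge backward'' (and, like you, it tacitly excludes $\lambda=-1$), so this is not a gap relative to the paper's proof; but you should state the one-sidedness as the hypothesis actually used rather than derive it from a nondegenerate quadratic term, which is stronger than necessary.
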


\begin{proof}

By considering $f^{-1}$ if necessary, we can assume that $\det (Df(z_0))<1$. This proposition follows from the following lemma (for example, see Carr \cite{Carr}, Theorem 1, page 16 and Lemma 1, page 20, where the results are proved for vector fields, analogous proofs holding for maps, as stated in pages 33-35):

\begin{lemma}

\label{carrr}

 Assume $f:\R^2\rightarrow\R^2$ is a $C^2$-diffeomorphism which fixes the origin and $Df(0,0)$ has $1$ and $\lambda \in(0,1)$ as eigenvalues. If we write $f$ in coordinates such that $f(x,y)=(x+u(x,y),\lambda y+v(x,y))$, where the functions $u$, $v$ and their first derivatives vanish at the origin, then there exists a $C^2$-function $h$ defined for  $\vert x\vert$ sufficiently small such that $h(0)=h^{\prime }(0)=0$, whose graph is invariant under iterates of $f$, a neighborhood $U$ of $(0,0)$ and $C>0$ such that for any segment of orbit $(x_k,y_k)_{0\leq k\leq n}$ included in $U$, one has $\left| y_n-h(x_n)\right| \leq C.\lambda ^n.\left| y_0-h(x_0)\right| $.

\end{lemma}

As we are assuming that the origin is an isolated fixed point, which is a saddle-node, without loss of generality we can suppose that points in the center manifold with negative $x$ coordinate converge to the origin under positive iterates of $f$ and points in the center manifold, with positive $x$ coordinate converge to the origin under negative iterates of $f.$

So, from Lemma \ref{carrr} the dynamics in a neighborhood of the origin can be obtained by gluing exactly three sectors, two adjacent hyperbolic sectors and one attracting one. Thus, under a $C^0$-coordinate change, it is easy to see that the fixed point is trivializable: the vertical foliation in this system of coordinates is topologically transverse to the natural foliation by locally invariant leaves defined by the sadlle-node (see Figure 2).

\begin{figure}[ht!]
\hfill
\includegraphics [height=48mm]{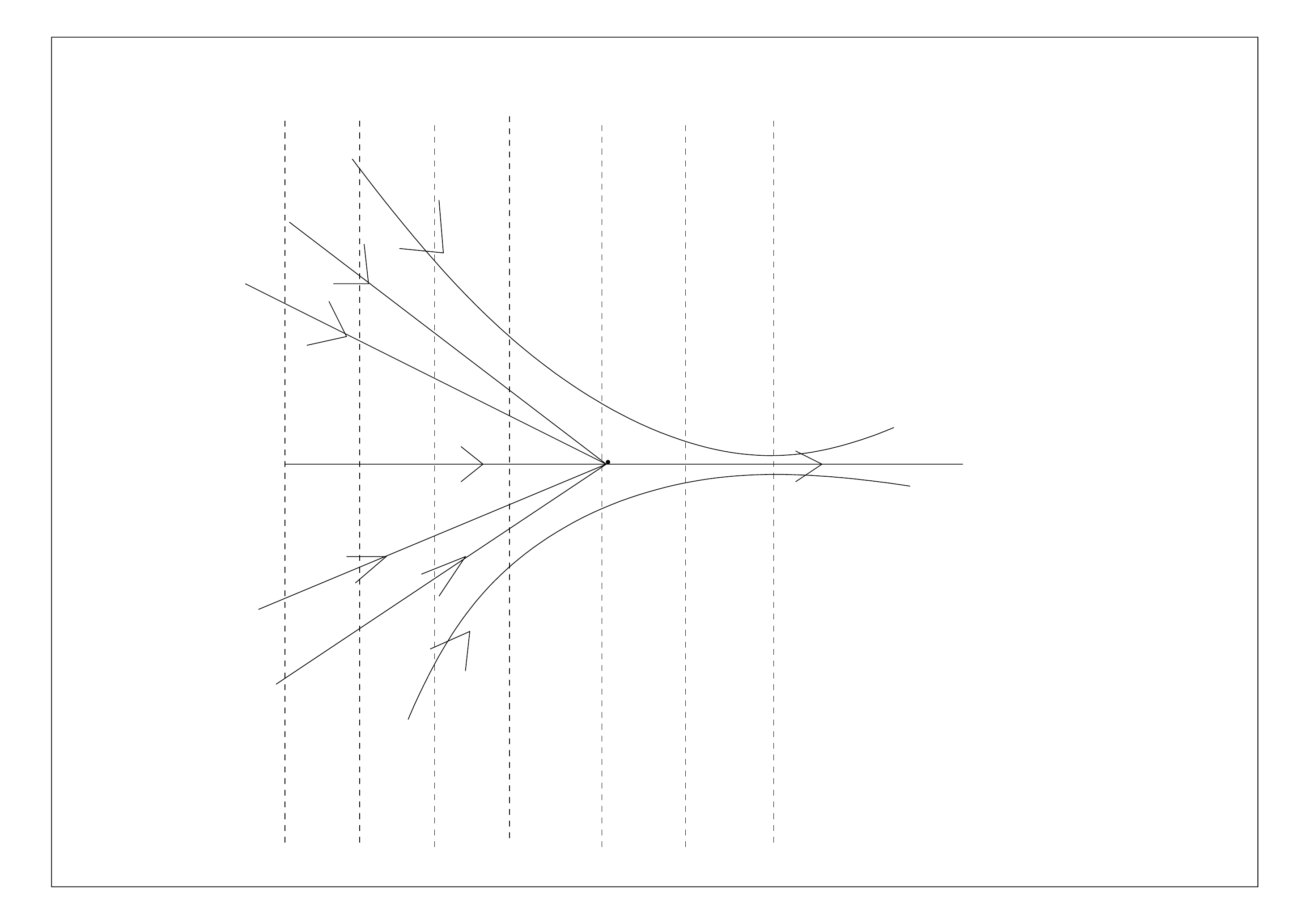}
\hfill{}
\caption{\small }
%\label{figure_order_lines}
\end{figure}

\end{proof}

\begin{proposition}

\label{analytic} 

Let $f$ be an area preserving and analytic diffeomorphism of a surface $M$. Every isolated fixed point $z_0$ of $f$ of Lefschetz index $0$ is trivializable.

\end{proposition}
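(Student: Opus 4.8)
The plan is to reduce the statement to the topological normal form established in Section~2 and then, on that model, to exhibit explicitly one coordinate of a chart that is strictly increased by the dynamics. First I would take $M$ to be an open subset of $\R^{2}$ with $z_{0}=(0,0)$ and invoke the analysis of Section~2: since $f$ is analytic, area preserving and $(0,0)$ is an isolated fixed point of Lefschetz index $0$, the germ of $f$ at $(0,0)$ is $C^{0}$-conjugate (in the weak sense of Section~2) to the germ at $(0,0)$ of the time-one map $g=\phi^{1}$ of a planar flow $(\phi^{t})$ whose only singularity near the origin is the origin, which has index $0$ and whose local phase portrait consists of exactly two hyperbolic sectors and nothing else (Figure~1). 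Since being trivializable is invariant under topological conjugacy, it is enough to prove that $(0,0)$ is a trivializable fixed point of $g$.

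Next I would identify this two-sector picture with a concrete model. Write $\gamma_{1},\gamma_{2}$ for the two characteristic orbits issuing from $(0,0)$ that separate the hyperbolic sectors $S_{1}$ and $S_{2}$, with $\gamma_{1}$ the stable one — the flow moves towards $(0,0)$ along $\gamma_{1}$ — and $\gamma_{2}$ the unstable one; each of $S_{1},S_{2}$ has $\gamma_{1}$ as its stable boundary separatrix and $\gamma_{2}$ as its unstable one, and a punctured neighbourhood of $(0,0)$ is the disjoint union $\gamma_{1}\cup\gamma_{2}\cup S_{1}\cup S_{2}$. This oriented picture is, up to orbit equivalence, exactly the local picture at the origin of the cusp vector field $X=(y,x^{2})$, whose orbits near $(0,0)$ are the level curves of $H(x,y)=\tfrac{y^{2}}{2}-\tfrac{x^{3}}{3}$: the two characteristic orbits are the branches of $\{y^{2}=\tfrac{2}{3}x^{3}\}$, the lower one stable and the upper one unstable, and the two regions they cut out are hyperbolic sectors. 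Thus I obtain a homeomorphism $h_{0}$ between neighbourhoods of $(0,0)$ fixing $(0,0)$ and carrying the oriented orbit foliation of $(\phi^{t})$ onto that of $X$ (orbits to orbits, respecting the sense of the motion; replace $X$ by $-X$, i.e.\ $y$ by $-y$, if necessary). The key observation is that along any \emph{non-constant} orbit of $X$ the function $y$ is strictly increasing, since $\tfrac{d}{dt}y=x^{2}\ge 0$ and a non-constant orbit can meet $\{x=0\}$ only at isolated instants (if $x$ and $\dot x=y$ both vanish at some point of an orbit, that orbit is the constant one). Hence $p_{2}\circ h_{0}$ is strictly increasing along every non-constant $\phi$-orbit, and putting $h:=(\,p_{2}\circ h_{0}\,,\ p_{1}\circ h_{0}\,)$ — still a homeomorphism fixing $(0,0)$ — one gets, for every $z\neq(0,0)$ close enough to $(0,0)$ that the orbit arc of $(\phi^{t})$ from $z$ to $g(z)=\phi^{1}(z)$ stays in the good region, $p_{1}\circ h\circ g(z)=p_{2}\circ h_{0}(\phi^{1}(z))>p_{2}\circ h_{0}(z)=p_{1}\circ h(z)$, which is precisely the defining inequality of trivializability. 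Here one uses that $g(z)\neq z$ for $z\neq(0,0)$ near $(0,0)$, so that $h_{0}(g(z))$ lies strictly ahead of $h_{0}(z)$ along a non-constant $X$-orbit.

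The routine points — that $g$ still has $(0,0)$ as an isolated fixed point, that shrinking the neighbourhood keeps the time-one orbit arcs inside the good region (the flow being slow near the singularity), and that permuting coordinates preserves being a chart — I would settle quickly. The real work, and the main obstacle, is the reduction carried out in the first two paragraphs: one must (i) extract from the results quoted in Section~2 (\cite{singvet}, \cite{Dur}) that the weak $C^{0}$-conjugacy there can be upgraded to an honest topological conjugacy on a small enough neighbourhood — or at least to an orbit equivalence respecting the time direction and sending $g(z)$ strictly forward, which is all that trivializability needs — and (ii) justify carefully that an isolated singularity of a planar flow whose sector cycle is $(H,H)$ is orbit equivalent, as an oriented germ of foliation, to the cusp: one must check that this word forces two distinct characteristic separatrices, one stable and one unstable, each bounding both sectors, the stable/unstable assignment being the delicate point. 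This is a case of the sector classification of Dumortier and of Dumortier--Rodrigues--Roussarie, and it is where I would expect to need the most care.
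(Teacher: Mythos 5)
Your proposal is correct and follows essentially the same route as the paper: both reduce the statement, via the Section 2 analysis (the Takens--Moser formal vector field, its Lojasiewicz type, Dumortier--Rodrigues--Roussarie Theorem D, and the area-preservation plus index-$0$ count forcing exactly two hyperbolic sectors), to the topological model of Figure 1, and then exhibit a chart in which one coordinate strictly increases along the local dynamics. The only difference is cosmetic: the paper implicitly straightens the two-hyperbolic-sector flow so that orbits become horizontal lines, giving the stronger normal form $p_1\circ h\circ f(z)>p_1\circ h(z)$ and $p_2\circ h\circ f(z)=p_2\circ h(z)$, whereas you route through the cusp model $(y,x^2)$ and its monotone coordinate, which yields the strict inequality required for trivializability (sufficient for the proposition) but not the second identity.
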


\begin{proof} The proposition is an immediate consequence of the results stated in the previous section. Note that in this case, there exists a continuous chart $h:U\subset M\to V\subset \R^2$  at $z_0$ such for every 
$z\in U\cap f^{-1}(U)$, one has $p_1\circ h\circ f(z)>p_1\circ h(z)$ and $p_2\circ h\circ f(z)=p_2\circ h(z)$ if $z\not=z_0$. \end{proof}

Let us state now the fundamental proposition:

\begin{proposition}

\label{fundamental}

 Suppose that $\widetilde f\in \mathrm{Diff}_0^0({\T^2})$ has only trivializable fixed points. Then,  there exists a neighborhood $\mathcal U$ of $\widetilde f$ in $\in \mathrm{Diff}_0^0({\rm T^2})$ such that $(0,0)\not\in \mathrm{int}(\rho (\widetilde{f}'))$ for every ${\widetilde f'}\in\mathcal U$.

\end{proposition}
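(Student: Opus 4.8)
Before seeing the author's argument, here is the approach I would take.

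\emph{Reductions.} A trivializable fixed point $z_0$, with chart $h\colon U\to V$ as in the definition, is isolated (no point of $U\cap\widetilde f^{-1}(U)$ other than $z_0$ can be fixed), so $\mathrm{Fix}(\widetilde f)$ is discrete and, being $\Z^2$-invariant, finite modulo $\Z^2$; and it has Lefschetz index $0$, since reading $\widetilde f$ in the chart $h$ the displacement $h\widetilde fh^{-1}(\zeta)-\zeta$ has strictly positive first coordinate on a small punctured disc around $h(z_0)$, hence winding number $0$ on a small loop around $h(z_0)$, and the Lefschetz index is a topological invariant. For each fixed point $\widetilde z_i$ ($1\le i\le k$) of $\widetilde f$ modulo $\Z^2$ fix such a chart $h_i$ and a small closed topological disc $D_i\ni\widetilde z_i$, contained in $U_i\cap\widetilde f^{-1}(U_i)$, with $p_1 h_i\widetilde f>p_1 h_i$ on $\partial D_i$ and the $D_i$ pairwise disjoint modulo $\Z^2$. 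Outside $\bigcup_{v\in\Z^2}(D_i+v)$ the displacement of $\widetilde f$ is bounded away from $0$, and conjugation by the homeomorphisms $h_i$ is $C^0$-continuous on compact sets while $p_1 h_i\widetilde f-p_1 h_i$ is uniformly positive on the compact set $\bigcup_i\partial D_i$; hence there is a $C^0$-neighborhood $\mathcal U$ of $\widetilde f$ such that every $\widetilde f'\in\mathcal U$ has all of its fixed points in $\bigcup_v(D_i+v)$ and still satisfies $p_1 h_i\widetilde f'>p_1 h_i$ on each $\partial D_i$.

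\emph{Main idea: reduce to the fixed point free case.} Given $\widetilde f'\in\mathcal U$, the set of points of $D_i$ where $\widetilde f'$ fails to push $p_1 h_i$ to the right is a neighborhood of $\widetilde z_i$ that shrinks to $\{\widetilde z_i\}$ as $\widetilde f'\to\widetilde f$, and there the deficit is at most the modulus of continuity of $p_1 h_i$ evaluated at $\|\widetilde f'-\widetilde f\|_{C^0}$. One can therefore compose $\widetilde f'$, inside an arbitrarily small $\Z^2$-periodic neighborhood of $\mathrm{Fix}(\widetilde f)$, with a $C^0$-small homeomorphism which, read in each $h_i$, only adds rightward push, dominates this deficit near $\widetilde z_i$, and interpolates back to the identity in a region where $\widetilde f'$ already pushes to the right by a definite amount (so that no fixed point is created). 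The resulting map $\widetilde g$ is a lift of a torus homeomorphism homotopic to the identity and has no fixed point at all. By a theorem of Franks --- which rests on Brouwer's plane translation theorem --- a fixed point free such lift cannot have $(0,0)$ in the interior of its rotation set; equivalently, $\rho(\widetilde g)$ lies in a closed half-plane bounded by a line through the origin.

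\emph{The obstacle.} It remains to transfer this back to $\widetilde f'$, and this is the delicate point: $\widetilde g$ is a $C^0$-small perturbation of $\widetilde f'$, but of a size controlled only by $\|\widetilde f'-\widetilde f\|_{C^0}$, while the rotation set is not uniformly continuous, so one cannot merely invoke openness of ``$(0,0)\in\mathrm{int}(\rho(\cdot))$''. One gains something cheaply --- the modified maps all stay near the single fixed point free map obtained by removing $\mathrm{Fix}(\widetilde f)$, so their moduli of continuity are uniformly bounded, whence $d((0,0),\partial\rho(\widetilde f'))\to 0$ as $\widetilde f'\to\widetilde f$ --- but reaching the sharp conclusion $(0,0)\notin\mathrm{int}(\rho(\widetilde f'))$ requires more. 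I would argue that, the modification being supported near $\mathrm{Fix}(\widetilde f)$ where $\widetilde f'$ is $C^0$-close to a map fixing those points, the ergodic invariant measures realizing the extreme points of $\rho(\widetilde f')$ that are bounded away from $(0,0)$ are essentially unaffected, so $(0,0)$ stays interior to $\rho(\widetilde g)$ --- contradicting Franks' lemma; making this precise (via weak-$*$ compactness of invariant measures and continuity of rotation vectors) is where the real work lies. An arguably cleaner route avoids the modification altogether and applies Le Calvez's equivariant foliated version of Brouwer's theorem to $\widetilde f'$ itself: if $(0,0)\in\mathrm{int}(\rho(\widetilde f'))$ then the periodic orbits realizing rotation vectors in a neighborhood of $(0,0)$ (Franks, Misiurewicz--Ziemian) give transverse loops whose homology classes surround $0$ in $H_1(\T^2;\R)$; but the singularities of the transverse foliation lie in $\bigcup_i\pi(D_i)$, the total Lefschetz index there is $0$ (stable under perturbation because the fixed points of $\widetilde f$ are trivializable), and the rightward pushing on each $\partial D_i$ pins down the local model of the foliation on those circles --- whereupon a Poincar\'e--Hopf count rules out such transverse loops. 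I expect this index bookkeeping near the possibly intricate fixed point set of $\widetilde f'$ to be the technical heart of the proof.
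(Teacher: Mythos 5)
Your overall strategy --- restrict to a $C^0$-neighborhood on which every $\widetilde f'$ has all its fixed points inside small rectangles around $\mathrm{Fix}(\widetilde f)$ while still pushing the first chart coordinate to the right outside them, then compose with a rightward push supported near the fixed points to obtain a fixed point free lift $\widetilde g$ and invoke Franks --- is indeed the skeleton of the paper's proof. But the step you yourself flag as ``the obstacle'' is precisely the content of the paper's argument, and neither of your two suggested fixes closes it. The paper never compares $\rho(\widetilde g)$ with $\rho(\widetilde f')$ by continuity or by invariant measures. It argues by contradiction: if $(0,0)\in\mathrm{int}(\rho(\widetilde f'))$, Franks' realization theorem produces three periodic orbits $O_1,O_2,O_3$ whose rotation vectors have $(0,0)$ in the interior of their convex hull. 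The key lemma is a surgery on these orbits: whenever an orbit enters a rectangle $R_1(z)$, its last exit point lies in a right-hand buffer $R_0^+(z)$ and its first entry point in a left-hand buffer $R_0^-(z)$ (this is guaranteed by additional conditions built into the definition of the admissible neighborhood --- conditions controlling how orbits enter and leave the rectangles, which your neighborhood does not impose), so one may compose $\widetilde f'$ with a rightward-pushing homeomorphism supported on a thin neighborhood of a horizontal arc in $R_0(z)\setminus R_1(z)$ sending the exit point onto the entry point. This shortcuts the orbit, replacing it by a sub-orbit of smaller period whose rotation vector is the old one multiplied by a factor larger than $1$, while the other two orbits and the admissibility conditions are unchanged. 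After finitely many surgeries the three witnesses avoid all the rectangles, and only then can one kill the fixed points by a push supported in those rectangles without destroying the witnesses; the resulting fixed point free lift still has $(0,0)$ in the interior of its rotation set, contradicting Franks.

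Your proposed substitutes do not fill this gap. The measure-theoretic one (``the ergodic measures realizing extreme points are essentially unaffected'') is unjustified: orbits or measures realizing rotation vectors far from $(0,0)$ may charge arbitrarily small neighborhoods of $\mathrm{Fix}(\widetilde f)$ --- a periodic orbit of rotation vector $(1,0)$ can spend most of its time near a fixed point --- and after composing with the push they are simply no longer invariant, so no weak-$*$ argument keeps $(0,0)$ interior. The alternative via an equivariant foliation is only a sketch whose ``index bookkeeping'' you acknowledge not having carried out; in particular, total Lefschetz index $0$ in the discs does not by itself exclude transverse loops in the required homology classes. So the proposal sets up the right framework but is missing the decisive idea --- the orbit-shortcut lemma --- that makes the reduction to the fixed point free case legitimate.
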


\begin{proof} By hypothesis, $\mathrm{fix}(\widetilde f)$ is discrete and projects onto a  finite set of $\T^2$. Fix $\varepsilon_0>0$ and set $R_0(\widetilde z)= \widetilde z+ [-\varepsilon_0,\varepsilon_0]^2$ for every $\widetilde z\in \mathrm{fix}(\widetilde f)$. Conjugating $\widetilde f$ in $\mathrm{Diff}_0^0({\T^2})$ if necessary and choosing $\varepsilon _0>0$ small enough, one can suppose that the rectangles  $R_0(\widetilde z)$, $\widetilde z\in \mathrm{fix}(\widetilde f)$, are pairwise disjoint and that for every $\widetilde z\in \mathrm{fix}(\widetilde f)$ and every $z'\in R_0(\widetilde z)\setminus \{\widetilde z\}$, one has $p_1\circ \widetilde f^{-1}(\widetilde z')<p_1(\widetilde z')<p_1\circ\widetilde f(\widetilde z')$.

Fix $0<\varepsilon'_1<\varepsilon_1<\varepsilon_0$ and set 
\begin{align*}
R^+_0(\widetilde z)&= z+ (\varepsilon'_1, \varepsilon_0)\times(-\varepsilon_0,\varepsilon_0),\\
R^-_0(\widetilde z)&= z+ (-\varepsilon_0,-\varepsilon'_1)\times(-\varepsilon_0,\varepsilon_0),\\
R_1(\widetilde z)&= z+ [-\varepsilon'_1,\varepsilon'_1]\times[-\varepsilon_1,\varepsilon_1],\\
R^*_1(\widetilde z)&= z+ [-\varepsilon'_1,\varepsilon'_1]\times(-\varepsilon_1,\varepsilon_1).
\end{align*}

\begin{figure}[ht!]
\hfill
\includegraphics [height=48mm]{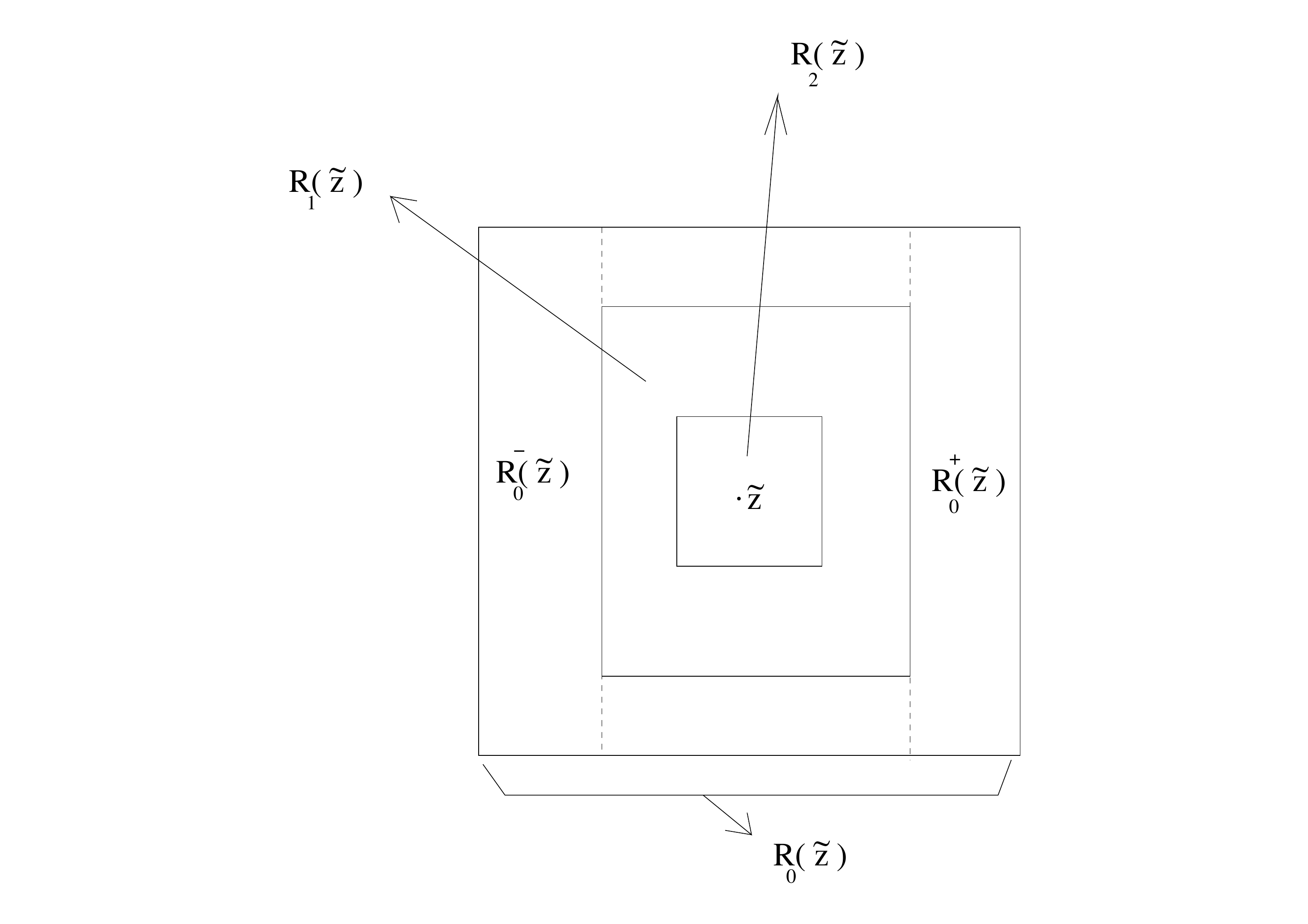}
\hfill{}
\caption{\small }
%\label{figure_order_lines}
\end{figure}

We will suppose $\varepsilon_1$ small enough to ensure that for every $\widetilde z\in \mathrm{fix}(\widetilde f)$, one has
$$ f(\widetilde z+ [\varepsilon_1, \varepsilon_1]^2)\cup  f^{-1}(\widetilde z+ [\varepsilon_1, \varepsilon_1]^2 )\subset \mathrm{int}(R_0(\widetilde z)).$$This implies that if $\varepsilon'_1$ is small enough then for every $\widetilde z\in \mathrm{fix}(\widetilde f)$ one has
$$\widetilde z'\in R_1(\widetilde z)\enskip \text{and } \widetilde f(\widetilde z')\not\in R^*_1(\widetilde z)\Rightarrow f(\widetilde z')\in R^+_1(\widetilde z)$$ and
$$\widetilde z'\in R_1(\widetilde z)\enskip \text{and } \widetilde f^{-1}(\widetilde z')\not\in R^*_1(\widetilde z)\Rightarrow f^{-1}(\widetilde z')\in R^-_1(\widetilde z).$$We will suppose that this is the case.

Now fix $\varepsilon _2\in (0, \varepsilon'_1)$ and set $R_2(\widetilde z)= \widetilde z+ [-\varepsilon_2,\varepsilon_2]^2$. We will suppose $\varepsilon_2$ small enough to ensure that for every $\widetilde z\in \mathrm{fix}(\widetilde f)$ one has
$$\widetilde f(R_2(\widetilde z))\cup  \widetilde f^{-1}(R_2(\widetilde z ))\subset \mathrm{int}(R_1(\widetilde z)).$$

We will say that $\widetilde f'\in\mathrm{Diff}_0^0({\T^2})$ is a {\it positive perturbation} of $\widetilde f$ if

\begin{enumerate}

\item $\mathrm{fix}(\widetilde f')\subset  \bigcup_{\widetilde z\in \mathrm{fix}(\widetilde f)}\mathrm{int}(R_2(\widetilde z))$;

\item for all $\widetilde z\in \mathrm{fix}(\widetilde f)$ and $\widetilde z'\in R_0(\widetilde z)\setminus \mathrm{int} (R_2(\widetilde z))$, the following inequality holds $$p_1\circ \widetilde f'^{-1}(\widetilde z')<p_1(\widetilde z')<p_1\circ \widetilde f'(\widetilde z');$$

\item for all $\widetilde z\in \mathrm{fix}(\widetilde f)$ and $\widetilde z'\in R_1(\widetilde z)$ such that  $\widetilde f'(\widetilde z')\not\in R^*_1(\widetilde z)$, then $\widetilde f'(\widetilde z')\in R^+_0(\widetilde z)$;

\item for all $\widetilde z\in \mathrm{fix}(\widetilde f)$ and $\widetilde z'\in R_1(\widetilde z)$ such that  $\widetilde f'{}^{-1}(\widetilde z')\not\in R^*_1(\widetilde z)$, then $\widetilde f'^{-1}(\widetilde z')\in R^-_0(\widetilde z)$;

\item 

for every $\widetilde z\in \mathrm{fix}(\widetilde f)$, one has $\widetilde f'(R_2(\widetilde z))\cup\widetilde f'{}^{-1}(R_2(\widetilde z_2))\subset \mathrm{int}(R_1(\widetilde z))$.

\end{enumerate}

Note  that the set $\mathcal U$ of positive perturbations of $\widetilde f$ is an open neighborhood of $\widetilde f$ in $\mathrm{Diff}_0^0({\T^2})$. Of course, it contains $\widetilde f$. The fact that it is open follows from the fact that the properties (1), (2) and (5) are  open, and that the set of maps satisfying (2), (3) and (5) is open, as is the set of maps satisfying (2), (4) and (5).

 To get the proposition, we will prove that that $(0,0)\not\in \mathrm{int}(\rho (\widetilde{f}'))$ if $\widetilde f'$ is a positive perturbation of $\widetilde f$.

We will argue by contradiction, supposing that  $(0,0)\in \mathrm{int}(\rho (\widetilde{f}'))$, for a positive perturbation $\widetilde f'$ of $\widetilde f$. In that case, using a result of Franks \cite{franksrat}, one knows that there exists three periodic orbits $O_i$, $1\leq i\leq 3$, of the homeomorphism $f'$ of $\T^2$ lifted by $\widetilde f'$ such that $(0,0)$ belongs to the interior of the convex hull of the $\rho(O_i)$, $1\leq i\leq 3$. For every $z\in \pi(\mathrm{fix}(\widetilde f))$, we set $R_0(z)=\pi (R_0(\widetilde z))$ if $z=\pi(\widetilde z)$, and define similarly $R^+_0(z)$, $ R^-_0(z)$, $R_1(z)$, $R^*_1(z)$  and $ R_2(z)$.

\begin{lemma}

\label{reducing}

If the orbit $O_1$ meets a rectangle $R_1(z)$, $z\in \pi(\mathrm{fix}(f))$, then there exists a positive perturbation $\widetilde f''$ of $\widetilde f$ such that $O_2$ and $O_3$ are periodic orbits of the homeomorphism $f''$ of $\T^2$ lifted by $\widetilde f''$, with unchanged rotation vectors and  a periodic orbit $O'_1\subset O_1$ whose rotation vector is a multiple of $\rho(O_1)$ by a factor larger than $1$, such that 

\begin{itemize}

\item $\sharp (O'_1\cap R_1(z))< \sharp (O_1\cap R_1(z))$ 

\item $O'_1\cap R_1(z')=O_1\cap R_1(z')$ for every $z'\in \pi(\mathrm{fix}(f))\setminus\{z\}$.

\end{itemize}

\end{lemma}

\begin{proof} Suppose that $z\in O_1\cap R_1(z)$.  The fact that the rotation vector of $O_1$ does not vanish implies that there exist $k^-<0<k^+$ such that  $$f'{}^{k^-}(z'_1) \not\in R_1(z_0),\enskip f'{}^{k^+}(z'_1) \not\in R_1(z_0)\enskip f'{}^{k}(z'_1) \in R_1(z_0) \text{ if }k^-<k<k^+.
$$  One deduces that $f'{}^{k^-}(z) \in R^-_0(z)$ and $f'{}^{k^+}(z) \in R^+_0(z)$ because $\widetilde f'$ is a positive perturbation of $\widetilde f$. One can find a horizontal graph $\Gamma$ in $\mathrm{int}(R_0(z))\setminus R_1(z)$ joining $f'{}^{k^-}(z)$ to $f'{}^{k^+}(z)$, which means a simple path that projects injectively onto the first factor of $\T^2$, then a neighborhood $U\subset \mathrm{int}(R_0(z))\setminus R_1(z)$ of $\Gamma$. The graph and the neighborhood can be chosen to meet $O_1\cup O_2\cup O_3$ only at the points $f'{}^{k^-}(z)$ and $f'{}^{k^+}(z) $. One can find a homeomorphism $h$ supported on $U$ such that $h(f'{}^{k^-}(z)) =f'{}^{k^+}(z)$. Moreover $h$ may be chosen such that it is lifted to a homeomorphism $\widetilde h\in\mathrm{Diff}_0^0({\T^2})$ supported on $\pi^{-1}(U)$  and satisfying $p_1\circ\widetilde h(\widetilde z')\geq p_1(\widetilde z')$ for every $\widetilde z'\in\R^2$.

\begin{figure}[ht!]
\hfill
\includegraphics [height=48mm]{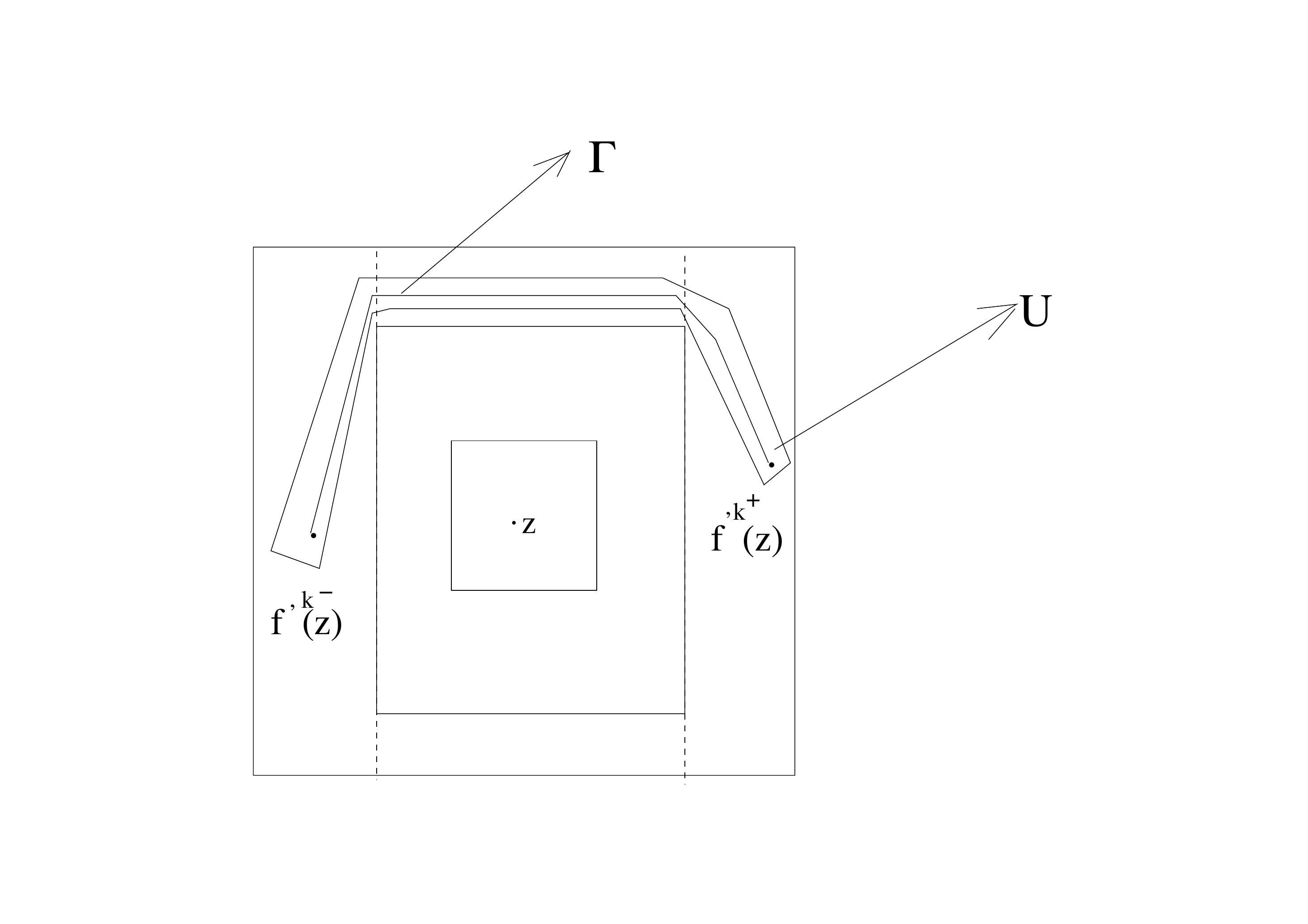}
\hfill{}
\caption{\small }
%\label{figure_order_lines}
\end{figure}

Let us explain why $\widetilde f''=\widetilde h\circ\widetilde f$ is a positive pertubation of $\widetilde f$ by   verifying that the five properties of the definition of positive perturbation are satisfied by $\widetilde f'''$

\smallskip
\noindent  (2) \enskip If there exists $\widetilde z\in \mathrm{fix}(\widetilde f)$ such that $\widetilde z'\in R_0(\widetilde z)\setminus \mathrm{int} (R_2(\widetilde z))$, then $\widetilde h^{-1}(\widetilde z') \in R_0(\widetilde z)\setminus \mathrm{int} (R_2(\widetilde z))$ and so 
$$p_1\circ\widetilde f'^{-1}\circ \widetilde h^{-1}(\widetilde z')<p_1\circ\widetilde h^{-1}(\widetilde z')\leq p_1(\widetilde z')<p_1\circ\widetilde f'(\widetilde z')\leq p_1\circ\widetilde h\circ \widetilde f'(\widetilde z').$$ 

\smallskip

\noindent  (1) \enskip The maps $\widetilde f''^{-1}$ and $\widetilde f'^{-1}$ coincide on the complement of $\bigcup_{\widetilde z\in \mathrm{fix}(\widetilde f)}R_0(\widetilde z)$ and the last one has no fixed point in this complement, so the fixed point set of $\widetilde f''$ is included in $\bigcup_{\widetilde z\in \mathrm{fix}(\widetilde f)}R_0(\widetilde z)$. The map $\widetilde f''$ satisfying $2$, its fixed point set is included in $\bigcup_{\widetilde z\in \mathrm{fix}(\widetilde f)}\mathrm{int}(R_2(\widetilde z))$.

\smallskip
\noindent  (3) \enskip If there exists $\widetilde z\in \mathrm{fix}(\widetilde f)$ such that $\widetilde z'\in R_1(\widetilde z)$ and  $\widetilde f''(\widetilde z')\not\in R^*_1(\widetilde z)$, then $\widetilde f'(\widetilde z')\not\in R^*_1(\widetilde z)$ (otherwise $\widetilde f''(\widetilde z')=\widetilde f'(\widetilde z')$) and so $\widetilde f'(\widetilde z')\in R^+_0(\widetilde z)$, which implies that $\widetilde f''(\widetilde z')\in R^+_0(\widetilde z)$, because 
$\widetilde h(R^+_0(\widetilde z))\subset R^+_0(\widetilde z)$.

\smallskip
\noindent  (4) \enskip   If there exists $\widetilde z\in \mathrm{fix}(\widetilde f)$ such that $\widetilde z'\in R_1(\widetilde z)$ and  $\widetilde f''{}^{-1}(\widetilde z')\not\in R^*_1(\widetilde z)$, then $\widetilde f'{}^{-1}(\widetilde z')=\widetilde f''^{-1}(\widetilde z')\not\in R^*_1(\widetilde z)$, and so $\widetilde f''{}^{-1}(\widetilde z')=\widetilde f'^{-1}(\widetilde z') \in R^-_0(\widetilde z)$.

\smallskip
\noindent  (5) \enskip  For every $\widetilde z\in \mathrm{fix}(\widetilde f)$, one has $\widetilde f'(R_2(\widetilde z))\cup\widetilde f'{}^{-1}(R_2(\widetilde z_2))\subset \mathrm{int}(R_1(\widetilde z))$. It implies that $\widetilde f''(R_2(\widetilde z))=\widetilde f'(R_2(\widetilde z))$ and $\widetilde f''{}^{-1}(R_2(\widetilde z_2))=\widetilde f'{}^{-1}(R_2(\widetilde z_2))$ and consequently that 
$\widetilde f''(R_2(\widetilde z))\cup\widetilde f''{}^{-1}(R_2(\widetilde z_2))\subset \mathrm{int}(R_1(\widetilde z))$.

Observe now that $f'$ and $f''$ coincide on $O_2$ and $O_3$ and that $\widetilde f'$ and $\widetilde f''$ coincide on $\pi^{-1}(O_2)$ and $\pi^{-1}(O_3)$. So, $O_2$ and $O_3$ are periodic orbits of $f''$ with rotation vectors unchanged. The orbit $O_1$ has been replaced by a periodic orbit $O'_1\subset O_1$ of shorter period whose rotation vector (for the lift $\widetilde f''$) is a multiple of the rotation vector of $O_1$ (for $\widetilde f'$) by a factor larger than $1$.\end{proof}

Note that $(0,0)$ is still in the interior of the convex hull of the new rotation vectors, the old ones multiplied by numbers greater than $1$. Applying the lemma  finitely many times to each orbit $O_i$ and each rectangle $R_1(z)$, one can always suppose that the orbits $O_i$ of $f'$ do not meet the rectangles $R_1(z)$, $z\in \pi(\mathrm{fix}(\widetilde f))$. One can find  $\widetilde h\in\mathrm{Diff}_0^0({\T^2})$ supported on $ \bigcup_{\widetilde z\in \mathrm{fix}(\widetilde f)}R_1(\widetilde z)$, that satisfies
$p_1\circ\widetilde h(\widetilde z')\geq p_1(\widetilde z')$ for every $\widetilde z'\in\R^2$ and such that
$\widetilde h\circ\widetilde f' (R_2(\widetilde z))\cap R_2(\widetilde z)=\emptyset$ for every $\widetilde z\in \mathrm{fix}(\widetilde f)$.

Note that $\widetilde g=\widetilde h\circ \widetilde f'$ is fixed point free. Indeed:

\begin{itemize}

\item $\widetilde g^{-1}$ and $\widetilde f'^{-1}$ coincide on the complement of $\bigcup_{\widetilde z\in \mathrm{fix}(\widetilde f)}R_1(\widetilde z)$ and $\widetilde f'^{-1}$ has no fixed point in this complement, so it is the same for $\widetilde g^{-1}$;

\item If there exists $\widetilde z\in \mathrm{fix}(\widetilde f)$ such that $\widetilde z'\in R_1(\widetilde z)\setminus \mathrm{int} (R_2(\widetilde z))$, then $p_1\circ\widetilde h\circ\widetilde f'(\widetilde z')\geq p_1\circ \widetilde f'(\widetilde z')>p_1(\widetilde z')$; 

\item  $\widetilde g(R_2(\widetilde z))\cap R_2(\widetilde z)=\emptyset$ for every $\widetilde z\in \mathrm{fix}(\widetilde f)$.

\end{itemize}

On the other hand,  each $O_i$ is a periodic orbit of the homeomorphism $g$ of $\T^2$ lifted by $\widetilde g$, with unchanged rotation vector because $O_i$ is disjoint from $\bigcup_{\widetilde z\in \mathrm{fix}(\widetilde f)}R_1(\widetilde z)$. In particular $(0,0)$ belongs to the interior of $\rho(\widetilde g)$. This contradicts Franks' result.
 \end{proof}

\section{Proofs of the theorems}

\subsection{Proof of Theorem 1}

Set $\rho=(p_1/q,p_2/q)$, where $p_1$, $p_2$ and $q$ are relatively prime. Replacing $\widetilde f$ by $\widetilde f^{q}-(p_1,p_2)$, it is sufficient to study the case where $\rho=(0,0)$. Denote $f$ the homeomorphism of $\T^2$ lifted by $\widetilde f$.

The function
$$\widetilde g:\widetilde z\mapsto  (p_1\circ \widetilde f(\widetilde z)-p_1(\widetilde z))^2+ (p_2\circ \widetilde f(\widetilde z)-p_2(\widetilde z))^2$$lifts an analytic function $g$ on $\T^2$ that vanishes exactly on $\pi(\mathrm{fix}(\widetilde f))$. If this set is not finite, it contains a simple closed curve (see \cite{non2} or \cite{milnor} ). Such a curve must be homotopically trivial because the rotation set of $\widetilde f$ has interior. This means that $\mathrm{fix}(\widetilde f)$ contains a simple closed curve $\Gamma$. This curves bounds a topological open disk $D$ invariant by $\widetilde f$. Moreover $\widetilde f$ is not the identity on this disk because it is analytic and not equal to the identity on the whole plane (its rotation set is not trivial). The fact that $\widetilde f\vert_D$ is area preserving implies, by a classical consequence of Brouwer's theory (see Franks \cite{f2} for example) that there exists a closed curve $C\subset D$ such that the Lefschetz index of $\widetilde f$ on $C$ is 1. Such a property, and consequently the existence of a fixed point is still satisfied for all $C^0$ perturbations of $\widetilde f$. In particular,  there exists a neighborhood $\mathcal U$ of $\widetilde f$ in $ \mathrm{Diff}_0^0({\rm T^2})$ such that every ${\widetilde f'}\in\mathcal U$ has a fixed point  and consequently one has $(0,0)\in \rho (\widetilde{f}')$. Suppose now that $\pi(\mathrm{fix}(\widetilde f))$ is finite. If the Lefchetz index of one fixed point of $\widetilde f$ is non zero, then there exists a neighborhood $\mathcal U$ of $\widetilde f$ in $ \mathrm{Diff}_0^0({\rm T^2})$ such that every ${\widetilde f'}\in\mathcal U$ has a fixed point  and consequently one has $(0,0)\in \rho (\widetilde{f}')$. It remains to study the case where all indices are equal to zero. Proposition \ref{analytic} tells us that every fixed point is trivializable. Applying Proposition \ref{fundamental}, one deduces that there exists a neighborhood $\mathcal U$ of $\widetilde f$ in $\in \mathrm{Diff}_0^0({\rm T^2})$ such that $(0,0)\not\in \mathrm{int}(\rho (\widetilde{f}'))$ for all ${\widetilde f'}\in\mathcal U$.

\subsection{Proof of Theorem 2}

Suppose that $\widetilde f$ is not highly degenerate at $\rho=(p/q,r/q)$ and that there exists a periodic point $z$  of period $q$ and rotation vector $\rho$ such that $1$ is not an eigenvalue of $Df^q(z)$. In that case the Lefschetz index of $f^q$ at $z$ is different from $0$ and one can conclude that there exists a neighborhood $\mathcal U$ of $\widetilde f$ in $\widetilde{\mathrm{Diff}}_0^0({\T^2})$ such that $\rho\in \rho (\widetilde{f}')$ for every ${\widetilde f'}\in\mathcal U$. If $1$ is an eigenvalue of $Df^q(z)$, for every point $z$  of period $q$ and rotation vector $\rho$, then every such point is trivializable by Propoistion \ref{saddle-node}. Applying Proposition \ref{fundamental}, one deduces that there exists a neighborhood $\mathcal U$ of $\widetilde f$ in $\widetilde{\mathrm{Diff}}_0^0({\T^2})$ such that $\rho\not\in \mathrm{int}(\rho (\widetilde{f}'))$ for every ${\widetilde f'}\in\mathcal U$.

\subsection{Proof of Theorem 3}

The main properties of the map $\widetilde f\in \widetilde{\mathrm{Diff}}^{\infty}_0(\T^2)$ that we want to construct are the following ones:

\begin{itemize}

\item the rotation set of $\widetilde f$ is included in the non negative cone $[0,+\infty)^2$ and contains the vectors $(0,0)$, $(0,1)$ and $(1,0)$;

\item each vector $(0,0)$, $(0,1)$ and $(1,0)$ is the rotation vector of a fixed point of the diffeomorphism $f\in \mathrm{Diff}_0^{\infty}(\T^2)$ lifted by $\widetilde f$;

\item the unique fixed point of rotation vector $(0,0)$ is $(0,0)+\Z^2$, it has a homoclinic point lifted to a heteroclinic point from $(0,1)$ to $(0,0)$ and a homoclinic point lifted to a heteroclinic point from $(1,0)$ to $(0,0)$;

\item the vector field $\widetilde z\mapsto \widetilde f(\widetilde z)-\widetilde z$ has no values in the negative cone $(-\infty,0)^2$;

\item each vertical line $\{k\}\times\R$, $k\in\Z$, is sent on its right by $\widetilde f$ and each horizontall line $\R\times\{k\}$ sent above.

\end{itemize}

The third assertion allows us to perturb $\widetilde f$ in a way that the new map has a rotation set whose interior contains $(0,0)$ and the last two assertions allow us to perturb $\widetilde f$ in a way that the new map has a rotation set which does not contain $(0,0)$.

The sets
$$\left\{(x,y)\in\R^2\,\vert\, \vert y\vert \leq \frac{1}{2\pi} \vert \sin (\pi x)\vert\right\}, \enskip \left\{(x,y)\in\R^2\,\vert\, \vert x\vert \leq \frac{1}{2\pi} \vert \sin (\pi y)\vert\right\}$$
project by $\pi$ onto connected compact subsets of $\T^2$ respectively denoted $H$ and $V$. We set $C=\overline{\T^2\setminus (H\cup V)}$ and then define $$\widetilde H=\pi^{-1}(H), \enskip \widetilde V=\pi^{-1}(V),  \enskip \widetilde C=\pi^{-1}(C).$$

\begin{proposition} \label{ExampleDiss} There exists $\widetilde{f}_H\in \widetilde{\mathrm{Diff}}^{\infty}_0(\T^2)$ such that:

\begin{enumerate}

\item  the fixed point set of $\widetilde{f}_H$ is $\widetilde H$;

\item  for every $\widetilde{z}\in \widetilde V\cup \widetilde C$, one has $p_1\circ \widetilde{f}_H(\widetilde{z})\geq p_1(\widetilde{z})$;

\item for every $\widetilde{z}\in  \mathrm{int}(\widetilde C)$, one has $p_1\circ \widetilde{f}_H(\widetilde{z})>p_1(\widetilde{z})$ and $p_2\circ \widetilde{f}_H(\widetilde{z})=p_2(\widetilde{z});$

\item  there exists $\widetilde{z}_0\in\widetilde V$ such that 
$$\lim_{k\to -\infty}\widetilde{f}_H^k(\widetilde{z}_0)=(0,1), \enskip \lim_{k\to +\infty}\widetilde{f}_H^k(\widetilde{z}_0)=(0,0);$$

\item  there exists  $\widetilde z_1\in \R\times\{1/2\}$ such that $\widetilde{f}_H(\widetilde{z}_1)=\widetilde{z}_1+(1,0).$

\end{enumerate}

\end{proposition}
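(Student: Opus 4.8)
I want to build $\widetilde f_H$ as an explicit $C^\infty$ diffeomorphism that realizes the dynamics pictured by the region $H$ of near-horizontal translation and the region $V$ of near-vertical translation, glued along the central region $C$ on which the map translates purely to the right. The natural way to do this is to write $\widetilde f_H$ as the time-one map of a (complete, $\Z^2$-periodic) $C^\infty$ vector field $\widetilde X$ on $\R^2$, since then properties like ``the fixed point set is $\widetilde H$'', ``points of $\mathrm{int}(\widetilde C)$ move strictly right with unchanged second coordinate'', and the existence of the heteroclinic connection become statements about the vector field that are easier to control. Concretely, I would take $\widetilde X(x,y)=(a(x,y),b(x,y))$ where: $a$ and $b$ both vanish exactly on $\widetilde H$ (so $\mathrm{fix}(\widetilde f_H)=\widetilde H$, giving (1)); $a\ge 0$ everywhere on $\widetilde V\cup\widetilde C$ with $a>0$ on $\mathrm{int}(\widetilde C)$; $b\equiv 0$ on $\widetilde C$ (giving the second half of (3)); and on $\widetilde V$ the field points ``downward along $\widetilde V$'', i.e. $b<0$ in the open part of $\widetilde V$ lying above the relevant fixed point, so that the vertical strip $\widetilde V$ contains an orbit descending from $(0,1)$ to $(0,0)$.

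**Key steps.** First I would fix the function $\varphi(x,y)=\tfrac1{2\pi}|\sin\pi x|$ defining $H$ (and its transpose for $V$), choose $C^\infty$ cutoff-type functions adapted to the three regions, and write down $a$ and $b$ as products of these so that the vanishing locus is exactly $\widetilde H$; I must be careful that despite $\varphi$ involving $|\sin\pi x|$ the boundary curves of $\widetilde H$ are smooth (they are, since $\sin\pi x$ has simple zeros, so $|\sin\pi x|$ is $C^\infty$ away from integers and near integers the region $\widetilde H$ pinches to the points $(k,0)$; some care is needed at those pinch points, and I would if necessary smooth $\varphi$ near the integers without changing any stated property). Second, I would check completeness of $\widetilde X$ (bounded field, or $\Z^2$-periodicity plus local existence) so the time-one map $\widetilde f_H$ is a well-defined element of $\widetilde{\mathrm{Diff}}^\infty_0(\T^2)$, and verify (2) and (3) directly from the signs of $a,b$ by integrating: if $a\ge0$ along an orbit then $p_1$ is non-decreasing, and if $a>0$ on $\mathrm{int}(\widetilde C)$ while $b=0$ there the orbit through an interior point of $\widetilde C$ moves horizontally and strictly right for as long as it stays in $\widetilde C$ — and since $b\equiv0$ on all of $\widetilde C$ it never leaves the horizontal line, so it either stays in $\mathrm{int}(\widetilde C)$ for all forward time or reaches $\partial\widetilde C\subset\widetilde H\cup\widetilde V$. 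Third, for (4) I would arrange the restriction of $\widetilde X$ to the vertical line $\{0\}\times\R$ to be tangent to that line (i.e. $a(0,y)=0$), with $b(0,y)<0$ for $y\in(0,1)$ and $b(0,0)=b(0,1)=0$; then the orbit through any $\widetilde z_0=(0,y_0)$, $y_0\in(0,1)$, is a one-dimensional gradient-like orbit on the segment, forward-asymptotic to $(0,0)$ and backward-asymptotic to $(0,1)$, which is precisely the heteroclinic connection required (and it lies in $\widetilde V$). Fourth, for (5) I would look at the horizontal line $\R\times\{1/2\}$: on it $\widetilde X$ can be taken to be a strictly positive horizontal field (no fixed points there, since that line is disjoint from $\widetilde H$), and by adjusting the speed profile I can make the time-one displacement of the point $(x_1,1/2)$ equal exactly $(1,0)$ for some $x_1$ — this is an intermediate-value argument once I know the time-one horizontal displacement varies continuously and takes values straddling $1$ (e.g. normalize so the displacement is close to $0$ where the line meets $\partial\widetilde V$ and large in the middle of $\widetilde C$), which gives $\widetilde f_H(\widetilde z_1)=\widetilde z_1+(1,0)$.

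**Main obstacle.** The delicate point is simultaneously meeting all five constraints with a single $C^\infty$ field: the zero set must be exactly $\widetilde H$ (not larger), yet on the line $\{0\}\times\R$ the horizontal component $a$ must vanish identically (to keep the heteroclinic orbit inside $\widetilde V$) without this forcing extra fixed points off $\widetilde H$ — so $b$ must be nonzero there except at $(0,0)$ and $(0,1)$, and I need the vanishing of $a$ on that line to be ``invisible'' to the fixed-point count. The cleanest fix is to let $a$ be divisible by the function cutting out $\widetilde H$ \emph{times} a factor that vanishes on $\{x\in\Z\}$, while $b$ is divisible by the $\widetilde H$-defining function times a factor nonvanishing off the integers and off the points $(k,1)+\Z^2$; then $\{a=b=0\}$ is $\widetilde H$ union the discrete integer-translates of $(0,1)$, and I absorb the latter by choosing the $\widetilde H$-defining function to already vanish at those points — i.e. enlarging $H$ very slightly (still homotopically as pictured) to contain the points $(0,1)+\Z^2$, or alternatively placing the heteroclinic target point at a vertex of $\widetilde H$. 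I expect the bulk of the work, and the only real subtlety, to be this bookkeeping near the pinch/vertex points $(k,0)$ and $(k,1)$, ensuring $C^\infty$ smoothness there while keeping the vanishing locus and all the sign conditions exactly as required; everything else is a routine integration/intermediate-value argument.
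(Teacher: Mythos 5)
Your basic field data is essentially the paper's: the paper's first step is the map $\widetilde z\mapsto\widetilde z+\varepsilon(\xi(\widetilde z),\eta(\widetilde z))$ with $\xi$ smooth, $\Z^2$-periodic, vanishing exactly on $\widetilde H\cup(\Z\times\R)$ and positive elsewhere, and $\eta$ vanishing on $\widetilde H\cup\widetilde C$ and negative on $\mathrm{int}(\widetilde V)$ --- the same sign pattern as your $(a,b)$, and your worries about the pinch points and about extra zeros at $(0,1)+\Z^2$ are non-issues (one only needs smooth functions vanishing exactly on prescribed closed sets, and $(0,1)+\Z^2=\Z^2\subset\widetilde H$ already; enlarging $H$ would in fact violate assertion (1)). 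The genuine gap is in your realization of (5): within your single-flow framework, (4) and (5) are incompatible. Since $a\geq 0$ everywhere, $p_1$ is monotone along flow lines, so the orbit in (4), asymptotic to $(0,1)$ and $(0,0)$, must lie on $\{0\}\times(0,1)$; it therefore passes through $(0,1/2)$, where the field must equal $(0,b)$ with $b<0$ (it cannot vanish, or (1) fails), and by $\Z^2$-periodicity the field is strictly vertical at every $(k,1/2)$. Your plan for (5) asks the field to be strictly horizontal on all of $\R\times\{1/2\}$ --- but that line, while disjoint from $\widetilde H$, is not disjoint from $\widetilde V$: it passes through $(0,1/2)$, so the two requirements clash pointwise. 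Nor can this be patched by another choice of field: with $b\equiv 0$ on $\widetilde C$, $b\leq 0$ on $\widetilde V$ and $b=0$ on $\widetilde H$, the $y$-coordinate is non-increasing in the strip $0<y<1$, so a point $\widetilde z_1=(x_1,1/2)$ with time-one image $\widetilde z_1+(1,0)$ would have to flow along the line $y=1/2$ across a full unit of $x$; since the components of $\widetilde C\cap(\R\times\{1/2\})$ have length $1-1/\pi<1$, that orbit would pass through some $(k,1/2)$, where the field is vertical --- contradiction. So (5) is simply unattainable for the time-one map of any field satisfying your conditions for (1)--(4).

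The missing idea is the paper's second step: only the \emph{discrete} orbit of $\widetilde z_0$, not the flow line, needs to avoid the line $y=1/2$. The paper chooses $\widetilde z_0\in\{0\}\times(0,1)$ whose $\widetilde f_\varepsilon$-orbit misses a band $\R\times[1/2-\delta,1/2+\delta]$ (possible because the orbit is a discrete set accumulating only at $(0,0)$ and $(0,1)$ and can be taken to jump over the line), and then sets $\widetilde f_H=\widetilde f'\circ\widetilde f_\varepsilon$, where $\widetilde f'\in\widetilde{\mathrm{Diff}}^{\infty}_0(\T^2)$ is supported in that band, preserves $p_2$, satisfies $p_1\circ\widetilde f'\geq p_1$ everywhere, and sends $\widetilde f_\varepsilon(\widetilde z_1)$ to $\widetilde z_1+(1,0)$ for a suitable $\widetilde z_1$ on the line $y=1/2$. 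The composition leaves (1)--(4) intact (the correction fixes the chosen heteroclinic orbit pointwise, never moves points left, and its support is disjoint from $\widetilde H$) and produces (5). You could keep your time-one map in place of $\widetilde f_\varepsilon$, but you must add this composed perturbation supported in a thin horizontal band avoided by the discrete heteroclinic orbit; without it your argument for (5) does not go through.
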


\begin{proof}

 Let us begin by choosing a pair of smooth $\Z^2$-periodic  real valued functions $\xi$ and $\eta$ on $\R^2$ such that:

\begin{itemize}

\item  $\xi$ vanishes on $\widetilde H\cup (\Z\times\R)$ and is positive elsewhere;

\item   $\eta$ vanishes on $\widetilde H\cup \widetilde C$ and is negative elsewhere (which means on $\mathrm{int}(\widetilde V)$).

\end{itemize}

The map $\widetilde{f}_\epsilon:  \widetilde{z}\mapsto \widetilde{z}+\varepsilon .(\xi(\widetilde{z}),\eta(\widetilde{z}))$ is smooth and lifts a smooth transformation of $\T^2$ homotopic to the identity. As the set of $C^\infty $ diffeomorphisms of the torus is open, if $\epsilon>0$ is sufficiently small, then $\widetilde{f}_{\epsilon}$ belongs to $\widetilde{\mathrm{Diff}}^{\infty}_0(\T^2)$.  The fixed point set of $\widetilde f_{\varepsilon}$ is $\widetilde H$. Moreover $\widetilde f_{\varepsilon}$ fixes every vertical $\{k\}\times\R$, $k\in\Z$, moving every point $(k,y)$, $y\not\in\Z$,negatively in the vertical direction. 

So we can choose a point $\widetilde z_0\in \{0\}\times (0,1)$ whose orbit avoids $\R\times\{1/2\}$ and its $\alpha$-limit is $(0,1)$ and its $\omega$-limit is $(0,0).$

% joins $(0,1)$ to $(0,0)$. 

The point $\widetilde z_1=(1/2,1/2)\in \partial \widetilde V$ is sent on its right by $\widetilde f_{\varepsilon}$ still on the horizontal line $\R\times\{1/2\}$. Let us choose $\delta\in (0, 1/2-1/2\pi)$ such that the orbit of $\widetilde z_0$ avoids $\R\times[1/2-\delta, 1/2+\delta]$. One can construct $\widetilde f'\in \widetilde{\mathrm{Diff}}^{\infty}_0(\T^2)$ such that:

\begin{itemize}

\item  $\widetilde f'(\widetilde z)=\widetilde z$ if $\delta\leq\vert p_2(\widetilde z)-1/2\vert\leq 1/2$;

\item  $p_2\circ \widetilde f' (\widetilde z)=p_2(\widetilde z)$ for all $\widetilde z\in \R^2$;

\item  $p_1\circ \widetilde f' (\widetilde z)\geq p_1(\widetilde z)$ for all $\widetilde z\in \R^2$;

\item $\widetilde f'\circ \widetilde{f}_{\epsilon} (\widetilde{z}_1)=\widetilde{z}_1+(1,0).$

\end{itemize} 

Let us verify that $\widetilde f_H=\widetilde f'\circ \widetilde f_{\varepsilon}$ satisfies the properties formulated in the proposition.

The assertion {(5)} is satisfied by construction and {(4)} because the orbit of $\widetilde z_0$ avoids the support of $\widetilde f'$. To get {(3)} note that $p_1\circ \widetilde{f}_H(\widetilde{z})\geq p_1\circ \widetilde{f}_{\varepsilon}(\widetilde{z})>p_1(\widetilde{z})$ and $p_2\circ \widetilde{f}_H(\widetilde{z})=p_2\circ \widetilde{f}_{\varepsilon}(\widetilde{z})=p_2(\widetilde{z})$ for every $\widetilde{z}\in  \mathrm{int}(\widetilde C)$. To get {(2)} note that $p_1\circ \widetilde{f}_H(\widetilde{z})\geq p_1\circ \widetilde{f}_{\varepsilon}(\widetilde{z})\geq p_1(\widetilde{z})$ for every $\widetilde{z}\in \widetilde V\cup \widetilde C$.  In fact the last inequality is strict if moreover $\widetilde z\not\in\Z\times\R$ and we have $p_2\circ \widetilde{f}_H(\widetilde{z})=p_2\circ \widetilde{f}_{\varepsilon}(\widetilde{z})< p_2(\widetilde{z})$ if $\widetilde z\in\Z\times\R$. Consequently the fixed point set of $\widetilde f_H$ is included in $\widetilde H$. Conversely, the fact that $\delta<1/2-1/2\pi$ implies that $\widetilde H$ is included in the fixed point set of $\widetilde f_H$, so {(1)} is proved.\end{proof}

In the same way, we can prove:

\begin{proposition} There exists $\widetilde{f}_V\in \widetilde{\mathrm{Diff}}^{\infty}_0(\T^2)$ such that:

\begin{enumerate}

\item  the fixed point set of $\widetilde{f}_V$ is $\widetilde V$;

\item  for every $\widetilde{z}\in \widetilde H\cup \widetilde C$, one has $p_2\circ \widetilde{f}_V(\widetilde{z})\geq p_2(\widetilde{z})$;

\item for every $\widetilde{z}\in  \mathrm{int}(\widetilde C)$, one has $p_2\circ \widetilde{f}_V(\widetilde{z})>p_2(\widetilde{z})$ and $p_1\circ \widetilde{f}_V(\widetilde{z})=p_1(\widetilde{z});$

\item  there exists $\widetilde{z}'_0\in\R^2$ such that
$$\lim_{k\to -\infty}\widetilde{f}_V^k(\widetilde{z}'_0)=(1,0), \enskip \lim_{k\to +\infty}\widetilde{f}_V^k(\widetilde{z}'_0)=(0,0);$$

\item  there exists  $\widetilde z'_1\in\{1/2\}\times\R$ such that $\widetilde{f}_V(\widetilde{z}'_1)=\widetilde{z}'_1+(0,1).$

\end{enumerate}

\end{proposition}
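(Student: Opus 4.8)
The plan is to repeat the construction of $\widetilde f_H$ given in Proposition \ref{ExampleDiss}, after conjugating by the linear involution that exchanges the two coordinates. Concretely, let $\sigma:\R^2\to\R^2$ be the involution $\sigma(x,y)=(y,x)$; it descends to an involution of $\T^2$ which exchanges $H$ and $V$, fixes $C$ setwise, and exchanges the two horizontal/vertical foliations. One checks directly that the list of five conditions defining $\widetilde f_V$ is precisely the $\sigma$-conjugate of the list of five conditions defining $\widetilde f_H$: condition (1) becomes "fixed point set is $\widetilde V$", conditions (2)--(3) interchange the roles of $p_1$ and $p_2$, the heteroclinic connection in (4) runs from $(1,0)$ to $(0,0)$ (the $\sigma$-images of $(0,1)$ and $(0,0)$), and (5) asks for a point on $\{1/2\}\times\R$ sent up by one unit. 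Hence it suffices to set $\widetilde f_V=\sigma\circ\widetilde f_H\circ\sigma^{-1}$ and transport each verified property through $\sigma$.

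To carry this out cleanly I would first observe that $\sigma$ lifts the torus involution (again denoted $\sigma$) and that $\widetilde f_V=\sigma\circ\widetilde f_H\circ\sigma$ is again an element of $\widetilde{\mathrm{Diff}}^{\infty}_0(\T^2)$, since $\sigma$ is a smooth diffeomorphism homotopic to the identity\footnote{If one prefers to avoid the fact that $\sigma$ is isotopic to the identity on $\T^2$, one may instead re-run the proof of Proposition \ref{ExampleDiss} verbatim with the roles of the coordinates exchanged; this is the safer route and is what I would actually write.} --- or, more safely, simply re-run the proof of Proposition \ref{ExampleDiss} verbatim after swapping the two coordinates everywhere: choose $\Z^2$-periodic smooth functions $\xi'$, $\eta'$ with $\eta'$ vanishing on $\widetilde V\cup(\R\times\Z)$ and positive elsewhere, $\xi'$ vanishing on $\widetilde V\cup\widetilde C$ and negative on $\mathrm{int}(\widetilde H)$, form $\widetilde f_\varepsilon':\widetilde z\mapsto \widetilde z+\varepsilon(\xi'(\widetilde z),\eta'(\widetilde z))$, pick $\widetilde z_0'\in\{0\}\times\R$ --- sorry, $\widetilde z_0'\in(0,1)\times\{0\}$ --- whose orbit avoids $\{1/2\}\times\R$ and which is heteroclinic from $(1,0)$ to $(0,0)$, then compose with a diffeomorphism $\widetilde f''$ supported near $\{1/2\}\times\R$, fixing every $p_1$-coordinate, pushing $p_2$ upward, and sending $\widetilde z_1'=(1/2,1/2)$ to $\widetilde z_1'+(0,1)$.

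The verification of (1)--(5) for $\widetilde f_V$ is then word-for-word the verification done in Proposition \ref{ExampleDiss}, with $p_1\leftrightarrow p_2$, $\widetilde H\leftrightarrow\widetilde V$, horizontal $\leftrightarrow$ vertical, and $(0,1)\leftrightarrow(1,0)$. In particular: the orbit of $\widetilde z_0'$ avoids the support of $\widetilde f''$, giving (4); $p_2\circ\widetilde f_V\geq p_2\circ\widetilde f_\varepsilon'>p_2$ on $\mathrm{int}(\widetilde C)$ while $p_1$ is preserved there, giving (3); the analogous non-strict inequality on $\widetilde V\cup\widetilde C$ gives (2); and the condition $\delta<1/2-1/2\pi$ again forces the fixed point set to be exactly $\widetilde V$, giving (1), with (5) true by construction.

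There is essentially no obstacle here: the only point requiring the slightest care is the bookkeeping that the symmetry $\sigma$ sends each of the five normalizations of $\widetilde f_H$ exactly to the corresponding normalization of $\widetilde f_V$ (for instance that $\sigma$ maps the heteroclinic orbit from $(0,1)$ to $(0,0)$ onto one from $(1,0)$ to $(0,0)$, and that it maps the line $\R\times\{1/2\}$ to $\{1/2\}\times\R$). Once that dictionary is written down, the statement follows immediately, which is why the body of the paper can legitimately say "In the same way, we can prove".
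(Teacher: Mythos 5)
Your proposal is correct and takes essentially the same approach as the paper, which offers no separate argument and simply asserts the proposition is proved ``in the same way'', i.e.\ by repeating the construction of $\widetilde f_H$ with the two coordinates exchanged. One small inaccuracy: $\sigma(x,y)=(y,x)$ is \emph{not} homotopic to the identity on $\T^2$, but this is harmless since conjugation by any homeomorphism preserves the property of being homotopic to the identity, and your ``safer route'' of re-running the proof of Proposition \ref{ExampleDiss} with $p_1\leftrightarrow p_2$ swapped is exactly what the paper intends.
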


It remains to prove that $\widetilde f=\widetilde f_V\circ \widetilde f_H$ satisfies the properties formulated in Theorem 3. Let us study the properties of the vector field $z\mapsto \widetilde f(\widetilde z)-\widetilde z$:

\begin{itemize}

\item if $\widetilde z\in \mathrm{int}(\widetilde C)$ and  $\widetilde f_H(\widetilde z)\in \mathrm{int}(\widetilde C)$, then 
$$p_1\circ \widetilde{f}(\widetilde{z})=p_1\circ \widetilde{f}_H(\widetilde{z})>p_1(\widetilde{z})\enskip\mathrm{and}\enskip p_2\circ \widetilde{f}(\widetilde{z})>p_2\circ \widetilde{f}_H(\widetilde{z})=p_2(\widetilde{z});$$

\item if $\widetilde z\in \mathrm{int}(\widetilde C)$ and  $\widetilde f_H(\widetilde z)\in \widetilde V$, then
$$p_1\circ \widetilde{f}(\widetilde{z})=p_1\circ \widetilde{f}_H(\widetilde{z})>p_1(\widetilde{z})\enskip\mathrm{and}\enskip p_2\circ \widetilde{f}(\widetilde{z})=p_2\circ \widetilde{f}_H(\widetilde{z})=p_2(\widetilde{z});$$

\item if $\widetilde z\in \widetilde V$ and  $\widetilde f_H(\widetilde z)\in \mathrm{int}(\widetilde C))$, then $p_1\circ \widetilde{f}(\widetilde{z})=p_1\circ \widetilde{f}_H(\widetilde{z})>p_1(\widetilde{z})$;

\item if $\widetilde z\in \widetilde V$ and $\widetilde f_H(\widetilde z)\in \widetilde V$, then $p_1\circ \widetilde{f}(\widetilde{z})=p_1\circ \widetilde{f}_H(\widetilde{z})\leq p_1(\widetilde{z})$ and $\widetilde f(z)\not=0$ if $z\not\in\Z^2$;

\item if $\widetilde z\in \widetilde H$, then $p_2\circ \widetilde{f}(\widetilde{z})\geq p_2\circ \widetilde{f}_H(\widetilde{z})= p_2(\widetilde z)$ and $\widetilde f(z)\not=0$ if $z\not\in\Z^2$.

\end{itemize}

Summarizing, the vector field $z\mapsto \widetilde f(\widetilde z)-\widetilde z$ vanishes only on $\Z^2$ and takes its value out of the negative cone $(-\infty,0)^2$. Moreover each vertical line $\{k\}\times\R$ is sent on its right by $\widetilde f$ and each horizontal line $\R\times\{k\}$ is sent above. One deduces that the rotation set of $\widetilde f$ is included in the non negative cone $[0,+\infty)^2$.  Let us consider a vector $v$ in the negative cone. The properties about vertical and horizontal lines are still satisfied for the diffeomorphism $\widetilde f-v$. So the rotation set of $\widetilde f-v$ is contained in the non negative cone.  But $\widetilde f-v$ is fixed point free,  and so by a result of J. Franks \cite{f2}, $(0,0)$ cannot be an extremal point of the rotation set. Consequently, it does not belong to this set. The vector $v$ may be chosen arbitrarily small, so one can perturb $\widetilde f$ in a way that $(0,0)$ does not belong to the rotation set.

By construction, one knows that $\widetilde f(\widetilde z_1)=\widetilde f_H(\widetilde z_1)= z_1+(1,0)$ and  $\widetilde f(\widetilde z'_1)=\widetilde f_V(\widetilde z'_1)= z'_0+(0,1)$. So the rotation set of $\widetilde f$ contains $(0,0)$, $(1,0)$ and $(0,1)$ and has non empty interior. Similarly, one knows $$\lim_{k\to -\infty}\widetilde{f}^k(\widetilde{z}_0)=(0,1), \enskip \lim_{k\to +\infty}\widetilde{f}^k(\widetilde{z}_0)=(0,0)$$ and $$\lim_{k\to -\infty}\widetilde{f}^k(\widetilde{z}'_0)=(1,0), \enskip \lim_{k\to +\infty}\widetilde{f}^k(\widetilde{z}'_0)=(0,0).$$

So, one can perturb $\widetilde f$  in a neighborhood of $\Z^2$ for the $C^0$ topology and get a map $\widetilde f'\in \widetilde{\mathrm{Diff}}^{\infty}_0(\T^2)$ arbitrarily close to $\widetilde f$ such that 
$$\widetilde f'(\widetilde z_1)=z_1+(1,0), \enskip \widetilde f(\widetilde z'_1)=z'_1+(0,1)$$ and such that there exist positive integers  $q$ and $q'$ satisfying
$$\widetilde f'^{q}(\widetilde z_0)=z_0-(0,1), \enskip \widetilde f'^{q'}(\widetilde z'_1)=z'_1-(1,0).$$

The rotation set of $\widetilde f'$ contains the vectors $(0,1)$, $(1,0)$, $(-1/q, 0)$, $(0, -1/q')$, so its interior contains $(0,0)$.

\subsection{Proof of Theorem 4}

The proof is very similar to the proof of Theorem 3, writing the example as a composition of a ``horizontal map'' and a ``vertical map''. We want the maps to satisfy similar properties but to be area preserving. The main problem is the construction of the homoclinic points, essential in the proof. As we will see, it is possible to do it, working in the space of homeomorphisms. Of course, one may ask if there exists a smooth or even a differentiable example (recall that Theorem \ref{main1} tells us that there is no in the space of analytic diffeomorphisms).

We will begin by changing the definition of the set $H$, $V$ and $C$. Fix $\alpha\in(0,1/10]$. The sets
$$\left\{(x,y)\in[-\frac 12,\frac 12]^2\,\vert\, \vert y\vert \leq 2\alpha \vert x\vert \right\}, \enskip \left\{(x,y)\in[-\frac 12,\frac 12]^2\,\vert\, \vert x\vert \leq 2\alpha \vert y\vert \right\}$$
project by $\pi$ onto connected compact subsets of $\T^2$ respectively denoted $H$ and $V$. We set $C=\overline{\T^2\setminus (H\cup V)}$ and then define $$\widetilde H=\pi^{-1}(H), \enskip \widetilde V=\pi^{-1}(V),  \enskip \widetilde C=\pi^{-1}(C).$$

The analogous of Proposition \ref{ExampleDiss} is the following:

\begin{proposition}  \label{ExampleConservative}There exists $\widetilde{f}_H\in \widetilde{\mathrm{Diff}}^{0}_0(\T^2)$ such that:

\begin{enumerate}

\item $f$ is area preserving

\item  the fixed point set of $\widetilde{f}_H$ is $\widetilde H$;

\item if $\widetilde{z}\in \widetilde C\setminus\widetilde H$, the vector $\widetilde f_H(\widetilde z)-\widetilde z$ belongs to the cone of equation $\vert y\vert <x$;

\item if $\widetilde{z}\in \widetilde V\setminus\widetilde H$ and $\widetilde f(\widetilde{z})\not\in \widetilde V$, the vector $\widetilde f_H(\widetilde z)-\widetilde z$ belongs to the cone of equation $\vert y\vert <x$;

\item if $\widetilde{z}\in \widetilde V\setminus\widetilde H$ and $\widetilde f(\widetilde{z})\in \widetilde V$, the vector $\widetilde f_H(\widetilde z)-\widetilde z$ belongs to the half-plane of equation $y<x$;

\item  there exists $\widetilde{z}_0\in\widetilde V$ such that $$\lim_{k\to -\infty}\widetilde{f}_H^k(\widetilde{z}_0)=(0,1), \enskip \lim_{k\to +\infty}\widetilde{f}_H^k(\widetilde{z}_0)=(0,0);$$

\item  there exists  $\widetilde z_1\in \widetilde V$ such that $\widetilde{f}_H(\widetilde{z}_1)=\widetilde{z}_1+(1,0).$

\end{enumerate}

\end{proposition}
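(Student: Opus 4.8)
The plan is to mimic the construction in Proposition \ref{ExampleDiss}, but to work in the area-preserving category. First I would build an area-preserving homeomorphism $\widetilde g$ whose fixed point set is exactly $\widetilde H$ and which pushes every point of $\widetilde C\setminus\widetilde H$ strictly to the right while, on a neighborhood of $\widetilde V$, it pushes points of $\mathrm{int}(\widetilde V)$ vertically \emph{down}, so that the vertical line $\{0\}\times\R$ is foliated by orbits converging to $(0,0)$ from above. The natural way to get such a map is as the time-one map of an area-preserving (that is, Hamiltonian) flow on $\T^2$ generated by a Hamiltonian function $K$ whose level set $K=0$ is exactly $\pi(\widetilde H)$; one checks directly that near the two ``corners'' where $H$, $V$ and $C$ meet the point $\Z^2$ one can arrange the cone conditions (3), (4), (5) on the displacement vector $\widetilde g(\widetilde z)-\widetilde z$ by choosing the sign and size of the gradient of $K$ appropriately on each of the three regions. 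Because $H$ is a genuine two-dimensional ``cross'' (with $\alpha>0$ chosen small so that $H$ has nonempty interior away from $\Z^2$), the fixed point set is truly $\widetilde H$ and not larger.

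The key difficulty — exactly the one flagged in the text — is producing the heteroclinic orbit of property (6): a point $\widetilde z_0\in\widetilde V$ with $\alpha$-limit $(0,1)$ and $\omega$-limit $(0,0)$, together with the orbit of property (7) realizing $(1,0)$ as a rotation vector of a periodic point. In the dissipative case this was free because the dynamics on $\{0\}\times\R$ was a north-to-south gradient-like flow with the fixed points $\Z$ as its only rest points; here area-preservation forbids a sink, so I cannot simply let all of $\{0\}\times\R$ flow down into $(0,0)$. The remedy is to not insist that the whole vertical line be invariant, but instead to compose $\widetilde g$ with an area-preserving homeomorphism $\widetilde h$, supported in a thin neighborhood of $\{0\}\times[0,1]$ disjoint from $\widetilde H$ away from the endpoints, that is itself the time-one map of a Hamiltonian flow whose only rest points in that neighborhood are $(0,0)$ and $(0,1)$ and whose separatrix structure has a single orbit running from the saddle-type point $(0,1)$ into the other. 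One must be careful: $(0,0)$ and $(0,1)$ are forced to be saddle-type (index-zero) fixed points, so the connection is a saddle connection rather than a stable-manifold-into-sink orbit; such a connection is exactly what a carefully chosen Hamiltonian produces, and area-preservation of the composition $\widetilde f_H=\widetilde h\circ\widetilde g$ is automatic since both factors are area-preserving. The reason this can only be done with homeomorphisms, not diffeomorphisms, is that making the saddle connection \emph{robust enough} to survive the later $C^0$-perturbations of Theorem \ref{main4} forces a non-smooth normal form at the corner — and of course Theorem \ref{main1} prohibits an analytic example outright.

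After $\widetilde f_H$ is constructed I would verify properties (1)--(7) directly: (1) because both factors preserve area; (2) because the fixed point set of $\widetilde g$ is $\widetilde H$ and $\widetilde h$ adds no fixed point in its (thin, $\widetilde H$-avoiding) support; (3)--(5) by the cone computation for $\widetilde g$ on each region, noting that $\widetilde h$ only moves points vertically inside $\widetilde V$ so it does not spoil the relevant inequalities; (6) by construction of $\widetilde h$; and (7) by a further local adjustment near the right edge $\{1/2\}\times\R\subset\partial\widetilde V$, exactly as the point $\widetilde z_1=(1/2,1/2)$ was handled in Proposition \ref{ExampleDiss}, inserting an area-preserving plug that sends $\widetilde z_1$ to $\widetilde z_1+(1,0)$ while keeping everything else intact. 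The main obstacle, as stressed, is the simultaneous requirement of area-preservation, the index-zero (hence saddle) nature of the two endpoints, and the existence of the connecting orbit; once one accepts working with homeomorphisms this is arranged by prescribing the Hamiltonian's level sets, and the rest is routine bookkeeping parallel to the smooth case.
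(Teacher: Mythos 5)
Your overall strategy (an area-preserving ``base'' map with fixed point set $\widetilde H$ and the cone conditions, then area-preserving corrections supported near the vertical segment to create the connecting orbit, then a final correction for the point translated by $(1,0)$) is the same as the paper's, but two of your key steps have genuine gaps. First, the base map cannot be the time-one map of a flow generated by a single-valued Hamiltonian $K$ on $\T^2$ with $\{K=0\}=\pi(\widetilde H)$: since $K\equiv 0$ on $H$ and $K\not\equiv 0$, $K$ attains an extremum with nonzero value at some point outside $H$, which is a critical point and hence a fixed point of the time-one map, so property (2) already fails (equivalently, a Hamiltonian time-one map has zero mean rotation vector, incompatible with pushing all of $\widetilde C\setminus\widetilde H$ strictly to the right while moving $\widetilde V$ only vertically). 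A flow can be used, but it must carry nonzero flux (only locally Hamiltonian); the paper sidesteps the issue entirely by conjugating the explicit horizontal shear $\widetilde f_0(x,y)=(x+\alpha/2-\alpha\vert y-1/2\vert,y)$ of the strip by an explicit area-preserving ``bending'' homeomorphism $\widetilde h$, which at once gives area preservation, fixed point set exactly $\widetilde H$, and the cone condition $\vert y\vert<x$ off $\widetilde H$.

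Second, the connecting orbit of property (6) is not obtained by your plug as described. Your $\widetilde h$ is built independently of the base map $\widetilde g$, and the fact that $\widetilde h$ alone has a saddle connection from $(0,1)$ to $(0,0)$ says nothing about the orbits of the composition $\widetilde h\circ\widetilde g$; no verification is offered that the composition has an orbit with the required $\alpha$- and $\omega$-limits. Moreover, a plug supported on a thin neighborhood $N$ of the whole segment $\{0\}\times[0,1]$ necessarily moves points upward on one side of the segment (its nonzero level curves are thin closed loops around the segment), and $N$ meets $\widetilde g^{-1}(N)$, so one cannot rule out new fixed points in $\widetilde V\setminus\widetilde H$ or violations of (4)--(5) for the composition; your claim that the plug ``only moves points vertically'' downward is false for an area-preserving compactly supported map with that invariant segment. (Your sketch is also internally inconsistent: the first paragraph asserts the base map already has $\{0\}\times\R$ foliated by orbits descending to $(0,0)$, the second paragraph retracts this.) The paper's mechanism is exactly what is missing here: it chooses points $(0,y_k)$ with $y_k\downarrow 0$, $y_k\uparrow 1$, and infinitely many \emph{small} pairwise disjoint disks $D_k$ containing the segments from $\widetilde f_1(0,y_k)$ to $(0,y_{k+1})$, with $D_k\cap \widetilde f_1^{-1}(D_k)=\emptyset$ and all difference vectors between $D_k$ and $\widetilde f_1^{-1}(D_k)$ lying in the half-plane $y<x$; the plugs $g_k$ supported there send $\widetilde f_1(0,y_k)$ to $(0,y_{k+1})$, so the \emph{composed} map has the orbit $(0,y_k)_{k\in\Z}$ and properties (2)--(5) are preserved by construction. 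Your step (7) is fine in spirit (the correcting map must be a shear supported in a horizontal band avoiding the orbit of $\widetilde z_0$, not a localized plug ``keeping everything else intact''), but as written the proposal does not establish the proposition.
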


\begin{proof} We will construct $\widetilde f_H$ step by step. Let us begin by stating elementary facts.

There exists a diffeomorphism $\theta :[0,1]\to [\alpha , 1-\alpha]$, uniquely defined, such that, for every $y\in[0,1]$ the quadrilater joining the points 
$$( \alpha,1/2), \,(1/2-\alpha, 1/2),  \,(\alpha -\alpha\vert 2y-1\vert,y), \,(1/2-\alpha, y)$$ and the quadrilater joining the points 
$$( \alpha,1/2), \,(1/2, 1/2), \,(\alpha -\alpha\vert 2y-1\vert,y),\,(1/2, \theta(y)) $$ have the same area. Consequently, there exists a continuous map $\widetilde h:\R\times[0,1]\to\R^2$, uniquely defined, such that

\begin{itemize}

\item the image of $\widetilde h$ is equal to $(\widetilde V\cup\widetilde C)\cap \R\times[0,1]$;

\item the map $\widetilde h$ is a homeomorphism between $\R\times[0,1]$ and its image;

\item the map $\widetilde h$ preserves the area

\item the image of the horizontal segment $$[\alpha -\alpha\vert 2y-1\vert, 1-3\alpha +\alpha\vert y-1/2\vert]\times\{y\}$$ is the broken segment passing through $$(\alpha -\alpha\vert 2y-1\vert,y), \, (1/2, \theta(y)) \enskip \mathrm{and} \enskip (1-\alpha +\alpha\vert 2y-1\vert,y);$$

\item $\widetilde h(x+k(1-2\alpha), y)=\widetilde h(x,y)+(k,0)$, for every $(x,y)\in \R\times[0,1]$ and every $k\in\Z$;

\item the fixed point set of $\widetilde h$ is equal to $\widetilde V\cap [-1/2,1/2]\times[0,1]$.

\end{itemize}

One gets an area preserving homeomorphism of $\R\times[0,1]$ by setting
$$\widetilde f_0(x,y)= (x+\alpha/2 -\alpha\vert y-1/2\vert, y)$$ whose fixed point set is the boundary of the strip $\R\times[0,1]$. Consequently, the map $\widetilde h\circ \widetilde f_0 \circ \widetilde h^{-1}$ can be extended in a unique way to a map $\widetilde f_1\in \widetilde{\mathrm{Diff}}^0_0(\T^2)$ whose fixed point set is equal to $\widetilde H$. Moreover $\widetilde f$ is area preserving and coincides  with $\widetilde f_0$ on the quadrilater
$$\{(x,y)\in\R\times[0,1]\enskip\vert\enskip \alpha -\alpha\vert 2y-1\vert\leq x\leq \alpha/2 -\alpha\vert y-1/2\vert\}.$$

Note that the slope of the segment joining $(\alpha -\alpha\vert 2y-1\vert,y)$ to $(1/2, \theta(y))$ and the slope of the segment joining $(1/2, \theta(y))$ to $(1-\alpha +\alpha\vert 2y-1\vert,y)$ are both smaller than $1$ in modulus. This implies  that if $\widetilde{z}\not\in\widetilde H$, then $\widetilde f_1(\widetilde z)-\widetilde z$ belongs to the cone of equation $\vert y\vert <x$;

Now, fix $\beta\in (0,1)$ and define a sequence $(y_k)_{k\in\Z}$ by the inductive relation
$$y_0=1/2, \enskip (1-y_{k-1}) =\beta(1-y_k) \enskip \mathrm{if} \enskip k\leq 0, \enskip y_{k+1}=\beta y_k \enskip \mathrm{if} \enskip k\geq 0.$$ 
The sequence $(y_k)_{k\in\Z}$ is decreasing and satisfies $$\lim_{k\to -\infty} y_k=1,\enskip \lim_{k\to+\infty }y_k=0.$$
Denote $\gamma_k$ the segment joining $\widetilde f_1(0, y_k)$ to $(0,y_{k+1})$. The segments $(\gamma_k)_{k\in\Z}$ are pairwise disjoint and each $\widetilde f_1^{-1}(\gamma_k)$ is a segment disjoint from $\gamma_k$. Note that every vector $z-z'$, $z\in\gamma_k$, $z'\in \widetilde f_1^{-1}(\gamma_k)$ belongs to the half-plane of equation $y<x$. So, one can construct a family of pairwise disjoint topological open disks $(D_k)_{k\in\Z}$ such that $D_k$ is a neighborhood of $\gamma_k$ disjoint from $\widetilde f_1^{-1}(D_k)$ and such that every vector $z-z'$, $z\in D_k$, $z'\in \widetilde f_1^{-1}(D_k)$ belongs to the half-plane of equation $y<x$.

For every $k\in\Z$, one can find an area preserving homeomorphism $g_k$ supported on $D_k$ and sending the point  $\widetilde f_1(0, y_k)$ onto $(0,y_{k+1})$. There exists a unique element $\widetilde g$ of $\widetilde{\mathrm{Diff}}^0_0(\T^2)$ that for every $k\in\Z$ coincides with $g_k$ on $D_k$ and fixes every point with no $\Z^2$ translation in a $D_k$.  This homeomorphism is area preserving. Note that $\widetilde f_2=\widetilde g\circ\widetilde f_1$ satisfies all properties formulated in Proposition \ref{ExampleConservative} but the last one. A last step is necessary to obtain $\widetilde f_H$.

Let us choose $1/2<a<c<b<\min (y_{-1}, 1-\alpha)$ and a continuous map $\varphi:[0,1]\to\R$ that vanishes out of $(a,b)$, takes positive values in $(a,b)$ and satisfies $\varphi(c)= 1-\alpha/2+\alpha(c-1/2)$. There exists a unique area preserving homeomorphism $\widetilde g_1\in \widetilde{\mathrm{Diff}}^0_0(\T^2)$ such that $\widetilde g_1(x,y)= (x+\varphi(y), y)$, if $(x,y)\in\R\times[0,1]$. It sends the point $$\widetilde f_2(\alpha/2-\alpha(c-1/2),c)=\widetilde f_1(\alpha/2-\alpha(c-1/2),c)=(\alpha-\alpha (2c-1),c)$$ onto $(\alpha/2-\alpha(c-1/2)+1,c)$. Setting $\widetilde f_H=\widetilde g_1\circ \widetilde f_2$ and $\widetilde z_1= (\alpha/2-\alpha(c-1/2),c)$, one gets a map that satisfies all assertions formulated in Proposition \ref{ExampleConservative}. Note that the three assertions relative to the vector field are satisfied because the vector field $\widetilde z\mapsto \widetilde{f}_H(\widetilde{z})-\widetilde{z}$ is horizontal pointing to the right. \end{proof}

 Similarly we prove

 \begin{proposition} There exists $\widetilde{f}_V\in \widetilde{\mathrm{Diff}}^{0}_0(\T^2)$ such that:

\begin{enumerate}

\item $f$ is area preserving

\item  the fixed point set of $\widetilde{f}_V$ is $\widetilde V$;

\item if $\widetilde{z}\in \widetilde C\setminus\widetilde V$, the vector $\widetilde f_V(\widetilde z)-\widetilde z$ belongs to the cone of equation $\vert x\vert <y$;

\item if $\widetilde{z}\in \widetilde H\setminus\widetilde V$ and $\widetilde f(\widetilde{z})\not\in \widetilde H$, the vector $\widetilde f_V(\widetilde z)-\widetilde z$ belongs to the cone of equation $\vert x\vert <y$;

\item if $\widetilde{z}\in \widetilde H\setminus\widetilde V$ and $\widetilde f(\widetilde{z})\in \widetilde H$, the vector $\widetilde f_V(\widetilde z)-\widetilde z$ belongs to the half-plane of equation $x<y$;

\item  there exists $\widetilde{z}'_0\in\widetilde H$ such that $$\lim_{k\to -\infty}\widetilde{f}_V^k(\widetilde{z}'_0)=(1,0), \enskip \lim_{k\to +\infty}\widetilde{f}_V^k(\widetilde{z}'_0)=(0,0);$$

\item  there exists  $\widetilde z'_1\in \widetilde H$ such that $\widetilde{f}_V(\widetilde{z}'_1)=\widetilde{z}'_1+(0,1)$.

\end{enumerate}

\end{proposition}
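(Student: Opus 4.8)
The plan is to obtain $\widetilde f_V$ from $\widetilde f_H$ by conjugating with the coordinate exchange, exploiting the fact that the geometric setup used in the proof of Theorem 4 is symmetric under the involution $(x,y)\mapsto(y,x)$. Let $\widetilde\sigma:\R^2\to\R^2$ be the linear involution $\widetilde\sigma(x,y)=(y,x)$. Since $\widetilde\sigma(\widetilde z+(1,0))=\widetilde\sigma(\widetilde z)+(0,1)$ and $\widetilde\sigma(\widetilde z+(0,1))=\widetilde\sigma(\widetilde z)+(1,0)$, the map $\widetilde\sigma$ normalises the deck group $\Z^2$ and descends to an (orientation reversing) homeomorphism $\sigma$ of $\T^2$; moreover $\widetilde\sigma(\widetilde H)=\widetilde V$, $\widetilde\sigma(\widetilde V)=\widetilde H$ and $\widetilde\sigma(\widetilde C)=\widetilde C$, directly from the defining inequalities $\vert y\vert\le 2\alpha\vert x\vert$ and $\vert x\vert\le 2\alpha\vert y\vert$. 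Set $\widetilde f_V=\widetilde\sigma\circ\widetilde f_H\circ\widetilde\sigma$. Using that $\widetilde f_H$ commutes with integer translations and that $\widetilde f_H-\mathrm{Id}$ is $\Z^2$-periodic, a short computation with the relations above shows that $\widetilde f_V$ commutes with integer translations and that $\widetilde f_V-\mathrm{Id}$ is $\Z^2$-periodic, hence $\widetilde f_V\in\widetilde{\mathrm{Diff}}^0_0(\T^2)$.

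Then I would check the seven assertions one at a time, each being the image under $\widetilde\sigma$ of the corresponding assertion of Proposition \ref{ExampleConservative}. For $(1)$, $\widetilde\sigma$ has Jacobian of absolute value $1$, hence preserves Lebesgue measure, and a conjugate of an area preserving homeomorphism by an area preserving homeomorphism is area preserving. For $(2)$, $\mathrm{fix}(\widetilde f_V)=\widetilde\sigma(\mathrm{fix}(\widetilde f_H))=\widetilde\sigma(\widetilde H)=\widetilde V$. For the vector field assertions $(3)$--$(5)$, linearity of $\widetilde\sigma$ gives $\widetilde f_V(\widetilde z)-\widetilde z=\widetilde\sigma\bigl(\widetilde f_H(\widetilde\sigma\widetilde z)-\widetilde\sigma\widetilde z\bigr)$, and $\widetilde\sigma$ carries the cone $\{\vert y\vert<x\}$ onto the cone $\{\vert x\vert<y\}$ and the half-plane $\{y<x\}$ onto the half-plane $\{x<y\}$; together with $\widetilde\sigma(\widetilde C\setminus\widetilde V)=\widetilde C\setminus\widetilde H$, $\widetilde\sigma(\widetilde V\setminus\widetilde H)=\widetilde H\setminus\widetilde V$, and the equivalence $\widetilde f_H(\widetilde\sigma\widetilde z)\in\widetilde V$ if and only if $\widetilde f_V(\widetilde z)\in\widetilde H$, this yields $(3)$, $(4)$ and $(5)$. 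For $(6)$, put $\widetilde z'_0=\widetilde\sigma(\widetilde z_0)\in\widetilde\sigma(\widetilde V)=\widetilde H$; then $\widetilde f_V^k(\widetilde z'_0)=\widetilde\sigma(\widetilde f_H^k(\widetilde z_0))$, so its $\alpha$-limit is $\widetilde\sigma(0,1)=(1,0)$ and its $\omega$-limit is $\widetilde\sigma(0,0)=(0,0)$. For $(7)$, put $\widetilde z'_1=\widetilde\sigma(\widetilde z_1)\in\widetilde H$; then $\widetilde f_V(\widetilde z'_1)=\widetilde\sigma(\widetilde f_H(\widetilde z_1))=\widetilde\sigma(\widetilde z_1+(1,0))=\widetilde z'_1+(0,1)$.

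I do not expect a genuine obstacle. The only points needing a little care are the bookkeeping showing that $\widetilde f_V$ is still a lift of a torus homeomorphism homotopic to the identity, which follows from the commutation of $\widetilde\sigma$ with $\Z^2$, and the remark that the orientation reversal of $\widetilde\sigma$ is harmless for the area preservation statement. Everything else is a literal transcription, with the two coordinates and the roles of $\widetilde H$ and $\widetilde V$ interchanged, of the proof of Proposition \ref{ExampleConservative}; the conjugation argument is simply the clean way to record this symmetry.
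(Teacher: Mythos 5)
Your proof is correct, and it is essentially the paper's argument: the paper simply says ``Similarly we prove'' for this proposition, the implicit content being exactly the $x\leftrightarrow y$ symmetry of the sets $\widetilde H$, $\widetilde V$, $\widetilde C$ and of the cones, which your conjugation by $\widetilde\sigma(x,y)=(y,x)$ records cleanly. All the verifications (preservation of $\Z^2$-equivariance and area, the swap of fixed point sets, cones and half-planes, and the transported orbits $\widetilde z'_0=\widetilde\sigma(\widetilde z_0)$, $\widetilde z'_1=\widetilde\sigma(\widetilde z_1)$) are accurate.
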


It remains to prove that $\widetilde f=\widetilde f_V\circ \widetilde f_H$ satisfies the properties formulated in Theorem \ref{main4}.

Let us prove first that each vector $f(\widetilde z)-\widetilde z$, $\widetilde z\in\R^2$, is not a positive multiple of $(-1,-1)$.

\begin{itemize}

\item If $\widetilde{z}\in \widetilde C\setminus\widetilde H$ and $\widetilde f_H(\widetilde{z})\in \widetilde C\setminus\widetilde V$, then $\widetilde f_H(\widetilde z)-\widetilde z$ belongs to the cone of equation $\vert y\vert<x$ and $\widetilde f(\widetilde z)-\widetilde f_H(\widetilde z)$ to the cone of equation $\vert x\vert<y$, so $\widetilde f(\widetilde z)-\widetilde z$ belongs to the half-plane of equation $0<x+y$;

\item if $\widetilde{z}\in \widetilde C\setminus\widetilde H$ and $\widetilde f_H(\widetilde{z})\in \widetilde V$, then $\widetilde f(\widetilde z)-\widetilde z =\widetilde f_H(\widetilde z)-\widetilde z$ belongs to the cone of equation $\vert y\vert<x$;

\item if $\widetilde{z}\in \widetilde V\setminus\widetilde H$ and $\widetilde f_H(\widetilde{z})\in \widetilde C\setminus\widetilde V$, then $\widetilde f_H(\widetilde z)-\widetilde z$ belongs to the cone of equation $\vert y\vert<x$ and $\widetilde f(\widetilde z)-\widetilde f_H(\widetilde z)$ to the cone of equation $\vert x\vert<y$, so $\widetilde f(\widetilde z)-\widetilde z$ belongs to the half-plane of equation $0<x+y$;

\item if $\widetilde{z}\in \widetilde V\setminus\widetilde H$ and $\widetilde f_H(\widetilde{z})\in \widetilde V$, then $\widetilde f(\widetilde z)-\widetilde z =\widetilde f_H(\widetilde z)-\widetilde z$ belongs to the half-plane of equation $y<x$;

\item if $\widetilde{z}\in \widetilde H\setminus V$ and  $\widetilde f_V(\widetilde{z})\not\in \widetilde H$, then $\widetilde f(\widetilde z)-\widetilde z =\widetilde f_V(\widetilde z)-\widetilde z$ belongs to the cone of equation $\vert x\vert<y$;

\item if $\widetilde{z}\in \widetilde H\setminus V$ and  $\widetilde f_V(\widetilde{z})\in \widetilde H$, then $\widetilde f(\widetilde z)-\widetilde z =\widetilde f_V(\widetilde z)-\widetilde z$ belongs to the half-plane of equation $x<y$;

\item if $\widetilde{z}\in \widetilde H\cap \widetilde V$ then $\widetilde f(\widetilde z)=\widetilde z$.

\end{itemize}

If $v$ is a positive multiple of $(-1,-1)$, then $\widetilde f-v$ belongs to $ \widetilde{\mathrm{Diff}}^0_0(\T^2) $, is fixed point free and area preserving. As said in the previous section, $(0,0)$ is not an extremal point of the rotation set of $\widetilde f-v$. Let us prove that it does not belong to this set.

One gets a ``vertical topological line '' by adding to the broken line passing through $(0,0)$, $(1/2, \alpha/2)$ and $(0,1)$ all its translates by the vectors $(0,k)$, $k\in\Z$. This line is sent to its right by the maps $\widetilde f_1$, $\widetilde f_2$, $\widetilde f_H$. The fact that $\widetilde V$ is the fixed point set of $\widetilde f_V$ tells us that it is also sent to its  right by $\widetilde f$. Because the slopes are larger than $1$ in modulus, it is strictly sent on its right by the map $\widetilde f-v$. Similarly, one proves that 
the ``horizontal topological line '' obtained by adding to the broken line passing through $(0,0)$, $(1/2, \alpha/2)$ and $(1,0)$ all its translates by the vectors $(k,0)$, $k\in\Z$, is sent strictly above by  $\widetilde f-v$. Consequently, the rotation set of $\widetilde f-v$ is contained in $[0,+\infty)^2$ and cannot contain $(0,0)$.

By construction, one knows that $\widetilde f(\widetilde z_1)=\widetilde f_H(\widetilde z_1)= z_1+(1,0)$ and  $\widetilde f(\widetilde z'_1)=\widetilde f_V(\widetilde z'_1)= z'_0+(0,1)$. So the rotation set of $\widetilde f$ contains $(0,0)$, $(1,0)$ and $(0,1)$ and has non empty interior. Simlarly, one knows that $$\lim_{k\to -\infty}\widetilde{f}^k(\widetilde{z}_0)=(0,1), \enskip \lim_{k\to +\infty}\widetilde{f}^k(\widetilde{z}_0)=(0,0)$$ and $$\lim_{k\to -\infty}\widetilde{f}^k(\widetilde{z}'_0)=(1,0), \enskip \lim_{k\to +\infty}\widetilde{f}^k(\widetilde{z}'_0)=(0,0).$$

So, one can perturb $\widetilde f$  in a neighborhood of $\Z^2$ for the $C^0$ topology to get an area preserving map $\widetilde f'\in \widetilde{\mathrm{Diff}}^{0}_0(\T^2)$ such that 
$$\widetilde f'(\widetilde z_1)=z_1+(1,0), \enskip \widetilde f(\widetilde z'_1)=z'_1+(0,1)$$ and such that there exist positive integers  $q$ and $q'$ satisfying
$$\widetilde f'^{q}(\widetilde z_0)=z_0-(0,1), \enskip \widetilde f'^{q'}(\widetilde z'_1)=z'_1-(1,0).$$
The rotation set of $\widetilde f'$ contains the vectors $(0,1)$, $(1,0)$, $(-1/q, 0)$, $(0, -1/q')$, its interior contains $(0,0)$.

\vskip0.5truecm

{\it Acknowledgements:} We would like to thank Professor Robert Roussarie and Professor Freddy Dumortier for their help concearning the study of the local dynamics near isolated fixed points of analytic area-preserving diffeomorphisms of the plane.

\end{document}